\journal{Approximate Reasoning}
\newtheorem{theorem}{Theorem}[section]
\newtheorem{proposition}[theorem]{Proposition}
\newtheorem{lemma}[theorem]{Lemma}
\newdefinition{definition}[theorem]{Definition}
\newdefinition{example}[theorem]{Example}
\newdefinition{remark}[theorem]{Remark}
\newdefinition{question}[theorem]{Question}
\newproof{proof}{Proof}
\newcommand{\UP}{\blacktriangle}                              
\newcommand{\DOWN}{\blacktriangledown}                        
\begin{document}
\begin{frontmatter}

\title{Defining rough sets as core--support pairs of three-valued functions}

\author{Jouni J{\"a}rvinen\corref{jj}}
\ead{jjarvine@utu.fi}
\address{Department of Mathematics and Statistics, University of Turku, 20014 Turku, Finland} 

\author{S{\'a}ndor Radeleczki}
\ead{matradi@uni-miskolc.hu}
\address{Institute of Mathematics, University of Miskolc, 3515~Miskolc-Egyetemv{\'a}ros, Hungary}

\cortext[jj]{Corresponding author}

\begin{abstract}
  We answer the question what properties a collection $\mathcal{F}$ of three-valued functions on a set $U$ must fulfill so that there
  exists a quasiorder $\leq$ on $U$ such that the rough sets determined by $\leq$ coincide with the core--support pairs
  of the functions in $\mathcal{F}$. Applying this characterization, we give a new representation of rough sets determined by equivalences in terms of
  three-valued {\L}ukasiewicz algebras of three-valued functions.
\end{abstract}

\begin{keyword}
  Three-valued mapping
  \sep Approximation pair
  \sep Rough set
  \sep Polarity lattice
  \sep Three-valued {\L}ukasiewicz algebra 
\end{keyword}

\end{frontmatter}


\section{Introduction} \label{Sec:Introduction}

Rough set defined by Z.~Pawlak \cite{Pawl82} are closely related to three-valued functions. In rough set theory, knowledge about objects of a universe of discourse
$U$ is given by an equivalence $E$ on $U$ interpreted so that $x \, E \, y$ if the elements $x$ and $y$ cannot be distinguished in terms of the information
represented by $E$. Each set $X \subseteq U$ is approximated by two sets: the lower approximation $X^\DOWN$ consists of elements which certainly belong to
$X$ in view of knowledge $E$, and the upper approximation $X^\UP$ consists of objects which possibly are in $X$. Let $\mathbf{3} = \{0,u,1\}$ be the 3-element
set in which the elements are ordered by $0 < u < 1$. For any $X \subseteq U$, we can define a three-valued function $f$ such that $f(x) = 0$ if $x$ does not
belong to $X^\UP$, that is, $x$ is interpreted to be certainly outside $X$. We set $f(x) = 1$ when $x \in X^\DOWN$, meaning that $x$ certainly belongs to $X$.
If $x$ belongs to the set-difference $X^\UP \setminus X^\DOWN$, which is the actual area of uncertainty, we set $f(x) = u$.

On the other hand, in fuzzy set theory the `support' of a fuzzy set is a set that contains elements with degree of membership greater than $0$ and the
`core' is a set containing elements with degree of membership equal to $1$. Naturally, each $3$-valued function can be considered as
a fuzzy set, and for $f \colon U \to \mathbf{3}$, its core $C(f)$ can be viewed as a subset of $U$ consisting of elements which
certainly belong to the concept represented by $f$, and the support $S(f)$ may be seen as a set of objects possible belonging to the concept represented by $f$.
Obviously, $C(f) \subseteq S(f)$ for any three-valued function $f$. Note also that different roles of three-valued information, such as vague, incomplete or
conflicting information are considered in \cite{Ciucci2014}.

We call pairs $(A,B)$ of subsets of $U$ such that $A \subseteq B$ as `approximation pairs'. The motivation for this name is that
$X^\DOWN \subseteq X^\UP$ for all $X \subseteq U$ if and only if the relation defining the approximations is serial  (see \cite{Jarv07}, for instance). 
A relation $R$ on $U$ is serial if each element of $U$ is $R$-related to at least one element. Therefore, for a serial relation $R$ on $U$ and a subset
$X \subseteq U$, the pair $(X^\DOWN, X^\UP)$, called `rough set', is an approximation pair, and thus an approximation pair can be seen as a generalization of
a rough set $(X^\DOWN, X^\UP)$. 

The concept of `three-way decisions' is recently introduced to relate with the three regions determined by rough approximations; see \cite{Yao2012}, for example.
The idea of three-way decisions is to divide, based on some criteria, the universe $U$ into three pairwise disjoint  regions, called the
positive (\textsc{pos}),  negative (\textsc{neg}), and boundary (\textsc{bnd}) regions, respectively. Obviously, each such a three-way decision forms
an approximation pair $(\textsc{pos}, \textsc{pos} \cup \textsc{bnd})$ and each approximation pair $(A,B)$ induces a three-way decision, where the
positive area is $A$, the boundary equals the difference $B \setminus A$, and $B^c$ forms the negative region. More generally, there is a
one-to-one correspondence between approximation pairs and 3-valued functions. 

The three-valued chain $\mathbf{3}$ can be equipped with various algebraic structures. It is known that the 3-valued chain $\mathbf{3}$ forms a complete completely
distributive lattice, a Heyting algebra, a 3-valued {\L}ukasiewicz algebra, a semi-simple Nelson algebra, and a regular double Stone algebra, for instance.
The operations of these algebras are uniquely defined in $\mathbf{3}$ and they can be extended pointwise to $\mathbf{3}^U$, the set of all three-valued functions
on $U$. The set $\mathbf{3}^U$ is canonically ordered pointwise and the set $\mathcal{A}(U)$ of the approximation pairs of $U$ is ordered naturally by the
coordinatewise $\subseteq$-relations. The sets $\mathbf{3}^U$ and $\mathcal{A}(U)$ form isomorphic ordered structures (see Section~\ref{Sec:Three-valued}).
In addition, this means that $\mathcal{A}(U)$ has the above-mentioned algebraic structures 'lifted' from $\mathbf{3}$ (via $\mathbf{3}^U$).

We denote by $\mathcal{RS}$ the set of all rough sets $\{ (X^\DOWN, X^\UP) \mid X \subseteq U\}$ defined by some binary relation $R$ on $U$. If $R$ is an
equivalence, $\mathcal{RS}$ is a Stone algebra \cite{PomPom88}. In \cite{Com93} this result was improved by showing that $\mathcal{RS}$
forms a regular double Stone algebra. The three-valued {\L}ukasiewicz algebras defined by $\mathcal{RS}$ were considered in
\cite{BanChak96, BanChak97, Iturrioz99, Pagliani97}. P.~Pagliani \cite{Pagliani97} showed how a semisimple Nelson algebra can be
defined on $\mathcal{RS}$. In addition, as Stone algebras, three-valued {\L}ukasiewicz algebras, or Nelson algebras, they are subalgebras of $\mathcal{A}(U)$.
If $R$ is a quasiorder, $\mathcal{RS}$ is a complete polarity sublattice of $\mathcal{A}(U)$ as noted in \cite{JRV09}. Because rough sets
are approximations pairs, there is a complete polarity sublattice $\mathcal{F}$ of $\mathbf{3}^U$ such that its approximation pairs $\mathcal{A}(\mathcal{F})$
equal $\mathcal{RS}$. 

In this work, we obtain sufficient and necessary conditions (C1)--(C3) under which $\mathcal{A(F)} = \mathcal{RS}$ holds, where $\mathcal{F}$ is a complete polarity sublattice 
of $\mathbf{3}^{U}$ and $\mathcal{RS}$ is induced by a quasiorder. 
In the special case of an equivalence $R$, we have $\mathcal{A(F)} = \mathcal{RS}$ exactly when $\mathcal{F}$ satisfies (C1)--(C3) and
forms a {\L}ukasiewicz subalgebra of $\mathbf{3}^U$. The latter result uses our
earlier result stating that $\mathcal{RS}$ forms a 3-valued {\L}ukasiewicz algebra just in case $R$ is an equivalence relation \cite{JarRad11}. We also
prove that such an $\mathcal{F}$ forms a 3-valued {\L}ukasiewicz algebra when it is closed with respect to any of
the operations $^*$, $^+$, ${\triangledown}$, ${\vartriangle}$, $\to$, $\Rightarrow$ defined in $\mathbf{3}^U$ (see Proposition~\ref{Prop:ImplyLuka}).

This paper is structured as follows. In the next section, we consider the set $\mathbf{3}^U$ of all 3-valued functions on $U$
and the approximation pairs $\mathcal{A}(U)$ defined by them. We point out that $\mathbf{3}^U$ and $\mathcal{A}(U)$ form isomorphic
complete lattices. Also the basic definitions and facts related to rough sets are recalled in this section.
In Section~\ref{Sec:algebras}, we note how $\mathbf{3}^U$ forms a Heyting algebra, a three-valued {\L}ukasiewicz algebra, a semisimple Nelson algebra,
and a regular double Stone algebra. The operations on all these algebras are defined pointwise from the operations of $\mathbf{3}$.
Because $\mathcal{A}(U)$ is isomorphic to $\mathbf{3}^U$, all the mentioned algebras can be defined on
$\mathcal{A}(U)$, too. We describe these operations on $\mathcal{A}(U)$ in detail. We end this section by noting that if a complete polarity sublattice
$\mathcal{F}$ of $\mathbf{3}^U$ is closed with respect to at least one of the operations $^*$, $^+$, ${\triangledown}$, ${\vartriangle}$, $\to$, $\Rightarrow$
defined in  $\mathbf{3}^U$, then $\mathcal{F}$ is closed with respect to all these operations.

It is well-known that there is a one-to-one correspondence between quasiorders and Alexandrov topologies. In Section~\ref{sec:Alex},
we consider Alexandrov topologies defined by complete sublattices of $\mathbf{3}^U$. For a quasiorder $\leq$,
a necessary condition for $\mathcal{A(F)} = \mathcal{RS}$ to hold is that the collections $C(\mathcal{F})$ and $S(\mathcal{F})$ of
the cores and the supports of the maps in $\mathcal{F}$, respectively, form dual Alexandrov topologies. Moreover $C(\mathcal{F})$ must equal
$\wp(U)^\DOWN$, the set of lower approximations of subsets of $U$, and $S(\mathcal{F})$ needs to coincide with $\wp(U)^\UP$,
the set of upper approximations.

Together with Pagliani the authors of the current work presented in \cite{JaPaRa13} a representation of quasiorder-based rough sets, stating that
\begin{equation} \label{eq:increasing}
\mathcal{RS} = \{(A, B) \in \wp(U)^\DOWN \times \wp(U)^\UP \mid A \subseteq B \quad \mbox{and} \quad \mathcal{S} \subseteq A \cup B^c \},
\end{equation}
where $\mathcal{S}$ is the set of such elements that they are $\leq$-related only to itself. This representation appears simple compared
to the representation presented here. The fact is that there is already a lot of structural information in each $(A,B)$-pair of (\ref{eq:increasing}),
because each such pair is defined by a single quasiorder $\leq$. But if we just pick an arbitrary collection $\mathcal{F}$ of
three-valued functions (or approximation pairs), nothing is connecting these functions together. Probably for this reason the conditions under
which a complete polarity sublattice $\mathcal{F}$ of $\mathbf{3}^U$ is such that $\mathcal{A(F)} = \mathcal{RS}$ need to be more complicated
than (\ref{eq:increasing}).
Studying these characteristic properties possibly reveals something new and essential about the nature of rough sets determined by a quasiorder or by an equivalence.

Some concluding remarks end the article.

\section{Three-valued functions and approximations} \label{Sec:Three-valued}

We consider three-valued functions $f \colon U \to \mathbf{3}$ defined on a universe $U$, where $\mathbf{3}$ stands for the three-elemented chain $0<u<1$.  
The set of such functions ${\mathbf 3}^U$ may be ordered \emph{pointwise} by using the order of $\mathbf{3}$:
\[
f \leq g \iff f(x) \leq g(x) \ \mbox{for all} \ x \in U.
\]
With respect to pointwise  order,  $\mathbf{3}^U$ forms a complete lattice such that
\[
\big (\bigvee \mathcal{H} \big ) (x) = \max \{ f(x) \mid f \in \mathcal{F}\}
\quad \mbox{ and } \quad
\big (\bigwedge \mathcal{H} \big ) (x) = \min \{ f(x) \mid f \in \mathcal{F}\}
\]
for any $\mathcal{H} \subseteq \mathbf{3}^U$. 
The map $\bot \colon x \mapsto 0$ is the least element and  $\top \colon x \mapsto 1$ is the greatest element of  $\mathbf{3}^U$. 

It is well-known that $\mathbf{3}$ is equipped with several operations such as Heyting implication $\Rightarrow$, polarity ${\sim}$, pseudocomplement $^*$,
dual pseudocomplement $^+$, possibility $\triangledown$ and necessity $\vartriangle$ of three-valued {\L}ukasiewicz algebras and Nelson implication $\to$.
Any $n$-ary, $n \geq 0$,  operation $\phi$ on $\mathbf{3}$ can be `lifted' pointwise to an operation $\Phi$ on the set $\mathbf{3}^U$ by defining for the maps
$f_1, \ldots, f_n \in \mathbf{3}^U$ a function $\Phi(f_1, \ldots, f_n)$ in  $\mathbf{3}^U$ by setting
\[ (\Phi(f_1,\ldots,f_n))(x) = \phi(f_1(x),\ldots,f_n(x)) \ \mbox{ for all $x \in U$.} \]
The operation $\Phi$ then satisfies the same identities in $3^U$ as $\phi$ satisfies in $\mathbf{3}$.

Rough sets are pairs consisting of a lower and an upper approximation of a set. In this work, a generalization of such pairs are in an essential role.
Let $A,B \subseteq U$. We say that $(A,B)$ is an \emph{approximation pair} if $A \subseteq B$. We denote by $\mathcal{A}(U)$ the set of all approximation
pairs on the set $U$. The set $\mathcal{A}(U)$ can be ordered \emph{componentwise} by setting
\[
(A,B)\leq(C,D) \iff A\subseteq C \ \mbox{and} \ B\subseteq D.
\]
for all $(A,B),(C,D) \in \mathcal{A}(U)$. With respect to the componentwise order, $\mathcal{A}(U)$ is a complete sublattice of 
$\wp(U) \times \wp(U)$, where $\wp(U)$ denotes the family of all subsets of $U$. If $\{ (A_i, B_i) \mid i \in I\} \subseteq \mathcal{A}(U)$, then
\[ 
\bigvee_{i \in I} (A_i, B_i ) = \big (\bigcup_{i \in I} A_i, \bigcup_{i \in I} B_i \Big )
\quad \mbox{ and } \quad 
\bigwedge_{i \in I} (A_i, B_i) = \Big (\bigcap_{i \in I} A_i, \bigcap_{i \in I} B_i \Big ). \]
Note that $\mathcal{A}(U)$ can be viewed as an instance of 
\[ B^{[2]} = \{ (a,b) \in B^2 \mid a \leq b\},\]
where $B$ is a Boolean lattice. It is well known that $B^{[2]}$ is a regular double Stone lattice \cite{Gratzer}.

Every $f \in \mathbf{3}^{U}$ is completely determined by two sets
\[
C(f) = \{x\in U\mid f(x)=1\} \quad \mbox{ and } \quad S(f) = \{x \in U \mid f(x) \geq u\}
\]
called the \emph{core} and the \emph{support} of $f$, respectively. Clearly, $C(f)\subseteq S(f)$, and the pair $(C(f), S(f))$ is called the \emph{approximation pair of} $f$. 
Note that if $f(x) \in \{0,1\}$ for all $x \in U$, then $C(f) = S(f)$.

\begin{proposition} \label{Prop:Correspondence}
The mapping
\[
\varphi \colon \mathbf{3}^{U}\to\mathcal{A}(U), \quad f \mapsto (C(f),S(f)) 
\]
is an order-isomorphism.
\end{proposition}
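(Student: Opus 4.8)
The plan is to check the three standard ingredients of an order-isomorphism: that $\varphi$ is well-defined, that it is order-preserving and order-reflecting (i.e.\ $f \leq g \iff \varphi(f) \leq \varphi(g)$), and that it is a bijection. Being an order-embedding that is onto automatically yields an order-isomorphism, so strictly speaking injectivity need not be argued separately once the embedding property is established, but it is cleanest to exhibit the inverse explicitly.

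First I would verify well-definedness: for any $f \in \mathbf{3}^U$ we have $C(f) = f^{-1}(\{1\}) \subseteq f^{-1}(\{u,1\}) = S(f)$, so $(C(f),S(f))$ really lies in $\mathcal{A}(U)$; this is already noted in the text. Next, the order-embedding property. For the forward direction, if $f \leq g$ pointwise, then $f(x) = 1$ forces $g(x) = 1$ and $f(x) \geq u$ forces $g(x) \geq u$, so $C(f) \subseteq C(g)$ and $S(f) \subseteq S(g)$, i.e.\ $\varphi(f) \leq \varphi(g)$. For the converse, suppose $C(f) \subseteq C(g)$ and $S(f) \subseteq S(g)$; fix $x \in U$ and argue by the three possible values of $f(x)$: if $f(x) = 0$ then $f(x) \leq g(x)$ trivially; if $f(x) = u$ then $x \in S(f) \subseteq S(g)$, so $g(x) \geq u = f(x)$; if $f(x) = 1$ then $x \in C(f) \subseteq C(g)$, so $g(x) = 1 = f(x)$. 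In all cases $f(x) \leq g(x)$, hence $f \leq g$.

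Finally, surjectivity: given any $(A,B) \in \mathcal{A}(U)$, define $f \colon U \to \mathbf{3}$ by $f(x) = 1$ if $x \in A$, $f(x) = u$ if $x \in B \setminus A$, and $f(x) = 0$ if $x \notin B$; this is well-defined precisely because $A \subseteq B$. A direct check gives $C(f) = A$ and $S(f) = B$, so $\varphi(f) = (A,B)$. (This $f$ is also the unique preimage, since an order-embedding is injective, giving the two-sided inverse explicitly.) I do not anticipate a genuine obstacle here; the only point that needs care is the converse of the order comparison, where one must not merely compare cores and supports componentwise but translate the two set-inclusions back into a pointwise inequality via the case analysis on $f(x) \in \{0,u,1\}$. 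Everything else is a routine unwinding of the definitions of $C$, $S$, and the two orders.
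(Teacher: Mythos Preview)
Your proof is correct and follows essentially the same route as the paper: establish the order-embedding property via the forward/backward argument (the backward direction by the three-case analysis on $f(x)\in\{0,u,1\}$), and then exhibit the explicit three-case preimage $f_{(A,B)}$ to get surjectivity. The only cosmetic differences are that you make well-definedness and injectivity explicit, whereas the paper leaves these implicit.
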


\begin{proof} We first show that $\varphi$ is an order-embedding, that is,
\[ f\leq g \iff (C(f),S(f))\leq(C(g),S(g)). \] 
Assume $f \leq g$, that is, $f(x) \leq g(x)$ for all $x \in U$. If $x \in C(f)$, then $g(x) \geq f(x) = 1$ and $x \in C(g)$. So, $C(f) \subseteq C(g)$.
Similarly, if $x \in S(f)$, then $g(x) \geq f(x) \geq u$ and $x \in S(g)$. Therefore, also $S(f) \subseteq S(g)$ and we have proved $ (C(f),S(f))\leq(C(g),S(g))$.

Conversely, assume $ (C(f),S(f))\leq(C(g),S(g))$. If $f(x) = 0$, then trivially $f(x) \leq g(x)$. If $f(x) = u$, then $x \in S(f) \subseteq S(g)$ and $g(x) \geq u = f(x)$.
If $f(x) = 1$, then $x \in C(f) \subseteq C(g)$ and $g(x) = f(x)$. Hence, $f(x) \leq g(x)$ for all $x \in U$, that is, $f \leq g$.

We need to show that $\varphi$ is a surjection. Suppose $(A,B) \in \mathcal{A}(U)$. Let us define a function $f_{(A,B)}$ by
\begin{equation} \label{Eq:characteristic}
f_{(A,B)}(x)=\left\{
\begin{array}{ll}%
1 & \mbox{if $x \in A$,}\\
u & \mbox{if $x \in B\setminus A$,}\\
0 & \mbox{if $x \notin B$.}
\end{array}
\right.
\end{equation}
Now 
\[ \varphi(f_{(A,B)}) = (C(f_{(A,B)}), S(f_{(A,B)})) = (A,A\cup(B\setminus A))=(A,B). \]
We have now proved that $\varphi$ is an order-isomorphism. \qed
\end{proof}

A complete lattice $L$ is \emph{completely distributive} if for any doubly indexed
subset $\{x_{i,\,j}\}_{i \in I, \, j \in J}$ of $L$, we have
\[
\bigwedge_{i \in I} \Big ( \bigvee_{j \in J} x_{i,\,j} \Big ) = 
\bigvee_{ f \colon I \to J} \Big ( \bigwedge_{i \in I} x_{i, \, f(i) } \Big ), \]
that is, any meet of joins may be converted into the join of all possible elements obtained by taking the meet over $i \in I$ of
elements $x_{i,\,k}$\/, where $k$ depends on $i$.

The power set lattice $\wp(U)$ is a well-known completely distributive lattice \cite{DaPr02}. In $\wp(U) \times \wp(U)$, the joins and meets are
formed coordinatewise, so $\wp(U) \times \wp(U)$ is a completely distributive lattice. Also a complete sublattice of a completely distributive lattice is
clearly completely distributive. Thus, $\mathcal{A}(U)$ and $\mathbf{3}^{U}$ are completely distributive.

\begin{lemma} \label{lem:ContinuousMeetsJoins}
If $\mathcal{F} \subseteq \mathbf{3}^{U}$, then
\begin{enumerate}[\rm (i)]
\item $C \big(  \bigvee \mathcal{F} \big)  = \bigcup \{ C(f) \mid f \in \mathcal{F} \}$ \ and \
  $S \big(  \bigvee \mathcal{F} \big)  = \bigcup \{ S(f) \mid f \in \mathcal{F} \}$;
\item $C \big(  \bigwedge \mathcal{F} \big)  = \bigcap \{ C(f) \mid f \in \mathcal{F} \}$ \ and \
  $S \big(  \bigwedge \mathcal{F} \big)  = \bigcap \{ S(f) \mid f \in \mathcal{F} \}$.
\end{enumerate}
\end{lemma}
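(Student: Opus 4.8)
The plan is to prove each of the four equalities by tracing through the definitions of core and support together with the pointwise description of joins and meets in $\mathbf{3}^U$ given at the start of this section. All four claims have the same shape: an element $x$ lies in the core (resp. support) of $\bigvee\mathcal{F}$ or $\bigwedge\mathcal{F}$ exactly when a certain quantified statement holds over $f \in \mathcal{F}$, and that statement unwinds directly to membership in a union or intersection of the $C(f)$'s or $S(f)$'s.

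First I would treat (i). For the core equality, $x \in C(\bigvee\mathcal{F})$ means $(\bigvee\mathcal{F})(x) = 1$, which by the pointwise join formula $(\bigvee\mathcal{F})(x) = \max\{f(x) \mid f \in \mathcal{F}\}$ happens iff $f(x) = 1$ for \emph{some} $f \in \mathcal{F}$, i.e. iff $x \in \bigcup\{C(f) \mid f \in \mathcal{F}\}$. For the support equality, $x \in S(\bigvee\mathcal{F})$ means $(\bigvee\mathcal{F})(x) \geq u$, i.e. $\max\{f(x) \mid f \in \mathcal{F}\} \geq u$, which holds iff $f(x) \geq u$ for some $f \in \mathcal{F}$, i.e. iff $x \in \bigcup\{S(f) \mid f \in \mathcal{F}\}$. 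Then (ii) is handled dually: $x \in C(\bigwedge\mathcal{F})$ iff $\min\{f(x) \mid f \in \mathcal{F}\} = 1$ iff $f(x) = 1$ for \emph{every} $f \in \mathcal{F}$ iff $x \in \bigcap\{C(f) \mid f \in \mathcal{F}\}$, and similarly $x \in S(\bigwedge\mathcal{F})$ iff $f(x) \geq u$ for all $f \in \mathcal{F}$ iff $x \in \bigcap\{S(f) \mid f \in \mathcal{F}\}$.

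There is essentially no obstacle here; the only point requiring a word of care is the case $\mathcal{F} = \emptyset$, where $\bigvee\emptyset = \bot$ and $\bigwedge\emptyset = \top$, and one checks that $C(\bot) = S(\bot) = \emptyset = \bigcup\emptyset$ and $C(\top) = S(\top) = U = \bigcap\emptyset$, so the formulas remain valid. I would likely just remark this in passing, or silently assume $\mathcal{F} \neq \emptyset$ if the intended reading makes that harmless. One could alternatively derive the whole lemma from Proposition~\ref{Prop:Correspondence} and the coordinatewise description of joins and meets in $\mathcal{A}(U)$, since $\varphi$ preserves arbitrary joins and meets (being an order-isomorphism between complete lattices), but the direct pointwise argument is shorter and more transparent, so that is the route I would take.
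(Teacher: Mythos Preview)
Your proof is correct. The direct pointwise argument you give is valid, and your handling of the empty case is appropriate.

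The paper, however, takes precisely the alternative route you mention at the end: it invokes Proposition~\ref{Prop:Correspondence} to use that $\varphi \colon f \mapsto (C(f),S(f))$ is an order-isomorphism, hence preserves arbitrary joins and meets, and then reads off the result from the coordinatewise description of joins and meets in $\mathcal{A}(U)$ as a complete sublattice of $\wp(U)\times\wp(U)$. Your element-chasing argument is more elementary and self-contained, requiring nothing beyond the pointwise formula for $\bigvee$ and $\bigwedge$ in $\mathbf{3}^U$ and the definitions of $C$ and $S$; the paper's argument is slightly more structural, leveraging the isomorphism already established to avoid any explicit unwinding at the level of points. Both are short and essentially equivalent in effort, and you correctly identified both options.
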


\begin{proof} By Proposition~\ref{Prop:Correspondence}, the map $\varphi \colon f \to (C(f),S(f))$ is an order-isomorphism. Hence, it preserves all meets and
joins, and 
$\varphi (\bigvee \mathcal{F}) = \bigvee \{ \varphi(f) \mid f \in \mathcal{F} \}$.
By definition, $\varphi (\bigvee \mathcal{F}) = (C(\bigvee \mathcal{F}),S(\bigvee \mathcal{F}))$
and  $\bigvee \{ \varphi(f) \mid f \in \mathcal{F} \} = \bigvee \{ (C(f),S(f)) \mid f \in \mathcal{F} \}$. Because $\mathcal{A}(U)$ is a complete sublattice of
$\wp(U) \times \wp(U)$,  $\bigvee \{ (C(f),S(f)) \mid f \in \mathcal{F} \} = (\bigcup\{ C(f) \mid f \in \mathcal{F}\}, \bigcup\{ S(f) \mid f \in \mathcal{F}\})$.
Combining all these, we can write
\[
\Big (  C \big (  \bigvee \mathcal{F} \big ), S \big ( \bigvee \mathcal{F} \big ) \Big )
 = \varphi \big (  \bigvee \mathcal{F} \big)  = \bigvee_{f \in \mathcal{F}}\varphi(f) = \bigvee_{f \in \mathcal{F} } ( C(f), S(f) )  \\
 = \Big( \bigcup_{f \in \mathcal{F} }C(f), \bigcup_{f \in \mathcal{F} }S(f)\Big ),
\]
which proves (i), and (ii) is treated analogously. \qed
\end{proof}

The set of approximation pairs corresponding to a family $\mathcal{F} \subseteq \mathbf{3}^{U}$ is defined as
\[ \mathcal{A}(\mathcal{F}) = \{(C(f),S(f)) \mid f \in \mathcal{F\}}. \]
Obviously, for any $\mathcal{F} \subseteq \mathbf{3}^{U}$, the ordered sets $\mathcal{F}$ and $\mathcal{A}(\mathcal{F})$ are order-isomorphic,
whenever $\mathcal{F}$ is ordered pointwise and  $\mathcal{A}(\mathcal{F})$ coordinatewise.

Rough sets were introduced by Z. Pawlak \cite{Pawl82}. According to Pawlak's original definition, our knowledge
about objects $U$ is given by an equivalence relation. Equivalences are reflexive, symmetric and transitive binary relations.
An equivalence $E$ on $U$ is interpreted so that $x \, E \, y$ if the elements $x$ and $y$ cannot be distinguished by their known properties.
In the literature, numerous studies can be found on rough sets in which equivalences are replaced by different types of so-called
\emph{information relations} reflecting, for instance, similarity or preference between the elements of $U$
(see e.g.~\cite{DemOrl02, Orlowska1998}).

Let $U$ be a set and let $R$ be a binary relation on $U$. For any $x \in U$, we denote $R(x) = \{ y \mid x \, R \, y\}$. 
For all $X \subseteq U$, the \emph{lower} and \emph{upper approximations} of $X$ are defined by
\[ X^\DOWN = \{ x \in U \mid R(x) \subseteq X \} \mbox{ \quad and \quad }  X^\UP = \{ x \in U \mid R(x) \cap X \ne \emptyset \}, \]
respectively. The set $X^\DOWN$ may be interpreted as the set of elements that are \textit{certainly} in $X$, because  all elements 
to which $x$ is $R$-related are in $X$. Analogously, $X^\UP$ can be considered as the set of all elements that are \textit{possibly} in $X$, since in $X$ there
is at least one element to which $x$ is $R$-related. Quasiorders are reflexive and transitive binary relations.
For instance, a quasiorder $R$ can be interpreted as a \emph{specialization order}, where $x \, R \, y$ may be read as `$y$ is a specialization of $x$'.
In \cite{Ganter2008}, a specialization order is viewed as `non-symmetric indiscernibility' such that each element is indiscernible with all its
specializations, but not necessarily the other way round. Then, in our interpretation, $x \in X^\UP$ means that there is at least one specialization $y$ in $X$, which 
cannot be discerned from $x$. Similarly, $x$ belongs to $X^\DOWN$ if all its specializations are in $X$; this is then interpreted so that $x$ needs to be in $X$ in
the view of the knowledge $R$.

For all $X \subseteq U$, the pair $(X^\DOWN, X^\UP)$ is called the \emph{rough set of $X$}. The set of all rough sets is denoted by 
\[ \mathcal{RS} = \{ (X^\DOWN, X^\UP) \mid X \subseteq U \}. \]
Like any set of approximations,  $\mathcal{RS}$ is ordered coordinatewise:
\[
(X^\DOWN, X^\UP) \leq (Y^\DOWN, Y^\UP) \iff X^\DOWN \subseteq Y^\DOWN \quad \mbox{and} \quad X^\UP \subseteq Y^\UP.
\]
In this work, we consider relations $R$ which are at least reflexive. Then $X^\DOWN \subseteq X \subseteq X^\UP$, and therefore  each rough set  $(X^\DOWN,X^\UP)$ 
can be considered as an approximation pair in the above sense. 
For reflexive relations, $\mathcal{RS}$ is not necessarily a lattice. In fact, it is known that there are tolerances, that is,
reflexive and symmetric binary relations, such that  $\mathcal{RS}$ is not a lattice; see \cite{Jarv07}.

We restrict ourselves to the case in which $R$ is a quasiorder or an equivalence. This has the advantage that the rough sets algebras $\mathcal{RS}$ 
are complete polarity sublattices of $\mathcal{A}(U)$. This also means that there exists a complete polarity sublattice $\mathcal{F}$ of $\mathbf{3}^U$
such that its approximation pairs $\mathcal{A}(\mathcal{F})$ are equal to $\mathcal{RS}$.

In this work, we consider the question, what properties of complete polarity sublattice $\mathcal{F}$ of  $\mathbf{3}^{U}$ must additionally satisfy so that 
$\mathcal{A}(\mathcal{F}) = \mathcal{RS}$ holds, where $\mathcal{RS}$ is the collection of rough sets induced by a quasiorder or by an equivalence on $U$.

\section{Algebras defined on $\mathbf{3}^U$ and $\mathcal{A}(U)$} \label{Sec:algebras}

For an ordered set $(P,\leq)$, a mapping ${\sim} \colon P \to P$ satisfying
\[
{\sim}{\sim} x = x \quad \mbox{and} \quad x  \leq y \ \mbox{implies} \ {\sim} x \geq {\sim} y  
\]
is called a \emph{polarity}. Such a polarity $\sim$ is an order-isomorphism from $(P,\leq)$ to its dual $(P,\geq)$.
This means that $P$ is \emph{self-dual} to itself. Let us define an operation ${\sim}$ on $\wp(U) \times \wp(U)$ by
\[
{\sim}(A,B) = (B^{c},A^{c}),
\]
where for any $X \subseteq U$, $X^c$ denotes the \emph{complement} $U \setminus X$ of $X$.
We call the pair ${\sim}(A,B)$ as the \emph{opposite} of $(A,B)$.
Obviously, $\sim$ is a polarity. Let $L$ be a (complete) lattice with polarity. If $S$ is a (complete) sublattice of $L$
closed with respect to $\sim$, we say that $S$ is a \emph{(complete) polarity sublattice} of $L$.
Because $A \subseteq B$ implies $B^{c} \subseteq A^{c}$,  $\mathcal{A}(U)$ is a complete polarity sublattice of $\wp(U) \times \wp(U)$.

For any binary relation $R$ on $U$, the approximation operations $^\DOWN$ and $^\UP$ are dual, that is, for $X \subseteq U$,
\[ X^{c \UP} = X^{\DOWN c} \quad \mbox{and} \quad  X^{c \DOWN} = X^{\UP c}  .\]
This implies that for $(X^\DOWN, X^\UP) \in \mathcal{RS}$,
\[ {\sim}(X^\DOWN, X^\UP) = (X^{\UP c}, X^{\DOWN c}) =  (X^{c \DOWN}, X^{c \UP}). \]
Therefore, $\sim$ is a well-defined polarity also in $\mathcal{RS}$.

\begin{remark} \label{rem:orthopairs}
Our study has some resemblance to the study of so-called `orthopairs' by G.~Cattaneo and D.~Ciucci  \cite{Cattaneo2018}. 
They define \emph{De~Morgan posets} as bounded ordered sets with a polarity $\sim$. A pair $(x,y)$ is called an \emph{orthopair} if $x \leq {\sim}y$.
By introducing additional properties to a De~Morgan poset, one gets different algebraic structures of orthopairs.

Let $U$ be a set. Then $\wp(U)$ equipped with a set-theoretical complement $^c$ forms a De~Morgan poset. It is clear that
$(A,B) \in \mathcal{A}(U)$ if and only if $(A,B^c)$ is an orthopair. Orthopairs can be viewed as a generalization of
\emph{disjoint representation of rough sets} introduced by P.~Pagliani in \cite{Pagliani97}. 
\end{remark}

A \emph{De Morgan algebra} $(L,\vee,\wedge,\sim,0,1)$ is such that $(L,\vee,\wedge,0,1)$ is a bounded
distributive lattice and $\sim$ is a polarity. The operation $\sim$ can be defined also by the identities:
\[ {\sim}{\sim} x = x \quad \mbox{and} \quad {\sim} (x \wedge y) = {\sim} x \vee {\sim} y.\]
 
\begin{example} \label{exa:demorgan}
The chain $\mathbf{3}$ is a De~Morgan algebra in which $\sim$ is defined by:
\begin{center}
\begin{tabular}{c|c}
$x$ & ${\sim} x$\\\hline
$0$ & $1$\\
$u$ & $u$\\
$1$ & $0$
\end{tabular}
\end{center}
Also $(\mathbf{3}^{U},\vee,\wedge ,\sim,\bot,\top)$ is a De~Morgan algebra, where for any $f\in\mathbf{3}^{U}$, 
${\sim} f$ is defined pointwise by 
\[ ({\sim} f)(x) = {\sim} f(x). \] 
\end{example} 

\begin{lemma} \label{lem:support_demorgan}
If $f \in \mathbf{3}^U$, then
\[  C({\sim} f) = S(f)^{c} \qquad \mbox{and} \qquad S({\sim} f)=C(f)^{c}.\] 
\end{lemma}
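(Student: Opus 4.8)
The plan is to prove both equalities pointwise, which reduces each of them to a three-case inspection according to the value of $f(x)$ in $\mathbf{3} = \{0, u, 1\}$. The only ingredients needed are the definition $({\sim}f)(x) = {\sim}(f(x))$ from Example~\ref{exa:demorgan}, the table for ${\sim}$ on $\mathbf{3}$, and the definitions $C(g) = \{x \in U \mid g(x) = 1\}$ and $S(g) = \{x \in U \mid g(x) \ge u\}$.

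For the first identity, I would argue that $x \in C({\sim}f)$ iff $({\sim}f)(x) = 1$ iff ${\sim}(f(x)) = 1$; since $0$ is the unique element of $\mathbf{3}$ whose polarity equals $1$, this holds iff $f(x) = 0$, i.e.\ iff $f(x) < u$, i.e.\ iff $x \notin S(f)$. Hence $C({\sim}f) = S(f)^{c}$.

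For the second identity, observe that because ${\sim}$ is an order-reversing involution on $\mathbf{3}$ with ${\sim}u = u$, we have ${\sim}(f(x)) \ge u$ iff $f(x) \le u$ iff $f(x) \ne 1$. Thus $x \in S({\sim}f)$ iff $({\sim}f)(x) \ge u$ iff $f(x) \ne 1$ iff $x \notin C(f)$, giving $S({\sim}f) = C(f)^{c}$.

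There is no genuine obstacle here; the statement is a direct computation. The only point requiring a little care is the asymmetry in the definitions of core and support --- $S$ uses the threshold ${\ge}\,u$ while $C$ uses equality with $1$ --- which is precisely why ${\sim}$ interchanges them in the complemented form. As a byproduct, this lemma shows that the order-isomorphism $\varphi$ of Proposition~\ref{Prop:Correspondence} carries the pointwise polarity on $\mathbf{3}^{U}$ to the polarity ${\sim}(A,B) = (B^{c}, A^{c})$ on $\mathcal{A}(U)$.
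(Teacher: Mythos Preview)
Your proof is correct. The argument for the first identity is essentially identical to the paper's. For the second identity there is a minor difference: you argue directly via the equivalence ${\sim}(f(x)) \ge u \iff f(x) \le u$, whereas the paper derives it in one line from the first identity by substituting ${\sim}f$ for $f$ and using the involution ${\sim}{\sim}f = f$, obtaining $S({\sim}f) = C({\sim}{\sim}f)^c = C(f)^c$. Both are elementary; the paper's route is marginally slicker in that it avoids a second case inspection, while yours is self-contained and does not rely on the involution property.
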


\begin{proof}
For $x \in U$,
\[
x \in C({\sim}f)  \iff  ({\sim}f)(x)  = 1   \iff  {\sim} f(x) = 1  \iff  f(x) = 0  \iff  x \notin S(f)  \iff x \in S(f)^c,       
\]
which proves the first claim. Since ${\sim}{\sim}f = f$, we obtain
\[ S({\sim}f) = C({\sim}{\sim}f)^c =  C(f)^c.  \] \qed 
\end{proof}

Now $(\mathcal{A}(U),\vee,\wedge, {\sim},(\emptyset,\emptyset),(U,U))$ is a De~Morgan algebra isomorphic to 
$(\mathbf{3}^{U},\vee,\wedge,\sim,\bot,\top)$.
It is easy to see that $\varphi(\bot) = (\emptyset,\emptyset)$ and $\varphi(\top) = (U,U)$. By Proposition~\ref{Prop:Correspondence}, it is enough to show that
\[
\varphi({\sim} f) = (  C({\sim}f) , S({\sim}f))  = (S(f)^{c}, C(f)^{c})  = {\sim}(C(f), S(f)) = {\sim} \varphi(f).
\]

Following A.~Monteiro~\cite{Monteiro63}, we can define a \emph{three-valued {\L}ukasiewicz algebra} as an algebra $(L,\vee,\wedge,{\sim},{\triangledown},0,1)$
such that $(L,\vee,\wedge,\sim,0,1)$ is a De Morgan algebra and ${\triangledown}$ is an unary operation, called the \emph{possibility operator}, that satisfies the identities:
\begin{enumerate}[({L}1)]
\item ${\sim} x \vee {\triangledown} x=1$,
\item ${\sim x}  \wedge x = {\sim} x   \wedge {\triangledown} x$, and 
\item ${\triangledown}(x \wedge y) = {\triangledown}x \wedge {\triangledown} y$.
\end{enumerate}
Let us recall from \cite{Monteiro63} that the following facts hold for all $x \in L$,
\[ x \leq {\triangledown} x, \qquad {\triangledown} 0 = 0, \quad {\triangledown} 1 = 1, \quad {\triangledown}{\triangledown} x = {\triangledown} x, \quad 
{\triangledown}(x \vee y) =  {\triangledown}x \vee  {\triangledown}y.   \]
In addition $x \leq y$ implies ${\triangledown x} \leq  {\triangledown y}$. The \emph{necessity operator} is defined by 
\[ {\vartriangle} x = {\sim} {\triangledown} {\sim}  x. \]
The operation $\vartriangle$ can be seen as a dual operator of $\triangledown$, so $\vartriangle$ satisfies the dual assertions of the above. Also $\vartriangle$ and $\triangledown$ have
some mutual connections, for instance:
\[ {\vartriangle}\!{\triangledown} x =  {\triangledown} x \qquad \mbox{and} \qquad {\triangledown}\!{\vartriangle} x =  {\vartriangle} x .\]
{\L}ukasiewicz algebras satisfy the following \emph{determination principle} by Gr.~C.~Moisil (see e.g. \cite{Moisil63}):
\[  {\vartriangle}x =  {\vartriangle}y \quad \mbox{and} \quad {\triangledown}x = {\triangledown}y \quad \mbox{imply} \quad x = y. \]

It is known  \cite{Iturrioz99} that if $\mathcal{RS}$ is defined by an equivalence relation on $U$, then 
it forms a 3-valued {\L}ukasiewicz algebra such that
\[  {\vartriangle} (X^\DOWN,X^\UP) = (X^\DOWN, X^\DOWN) \qquad \mbox{and} \qquad {\triangledown} (X^\DOWN, X^\UP) = (X^\UP, X^\UP) .\]

\begin{example}\label{exa:3valued}
On the chain  $\mathbf{3}$ the operations ${\vartriangle}$ and ${\triangledown}$ are defined as in the following table:
\begin{center}
\begin{tabular}{c|cc}
$x$ & ${\vartriangle}x$ & ${\triangledown}x $\\ \hline
$0$ & $0$ & $0$ \\
$u$ & $0$ & $1$ \\
$1$ & $1$ & $1$
\end{tabular}
\end{center}
For a map $f \in \mathbf{3}^{U}$, the functions ${\vartriangle}f$ and ${\triangledown}f$ are defined pointwise, that is,
\[  ({\vartriangle}f)(x) =   {\vartriangle}f(x) \quad \mbox{and} \quad ({\triangledown}f)(x)  = {\triangledown}f(x) .\]

Also $\mathcal{A}(U)$ forms a three-valued {\L}ukasiewicz algebra in which 
\[ {\vartriangle}(A,B) = (A,A) \qquad \mbox{and} \qquad {\triangledown}(A,B) = (B,B).\] 
\end{example}

\begin{lemma} \label{lem:support_triangle}
If $f \in \mathbf{3}^U$, then
\[ C({\triangledown}f) =  S({\triangledown}f) = S(f). \]
\end{lemma}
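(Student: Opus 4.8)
The plan is to reduce the statement to the pointwise definition of the possibility operator recorded in Example~\ref{exa:3valued}. The first step is to read off from that table that $\triangledown$ maps $\mathbf{3}$ into the two-element set $\{0,1\}$: indeed $\triangledown 0 = 0$ and $\triangledown u = \triangledown 1 = 1$. Since $\triangledown f$ is defined by $(\triangledown f)(x) = \triangledown f(x)$, it follows that $(\triangledown f)(x) \in \{0,1\}$ for every $x \in U$. By the remark following the definition of the core and the support, any $g \in \mathbf{3}^U$ with $g(x) \in \{0,1\}$ for all $x$ satisfies $C(g) = S(g)$; applied to $g = \triangledown f$ this already gives $C(\triangledown f) = S(\triangledown f)$.

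The second step is to identify this common set with $S(f)$. Unwinding the definitions and using the table once more,
\[
x \in C(\triangledown f) \iff (\triangledown f)(x) = 1 \iff \triangledown f(x) = 1 \iff f(x) \in \{u,1\} \iff f(x) \geq u \iff x \in S(f),
\]
where the middle equivalence is exactly the statement that $\triangledown$ sends both $u$ and $1$ to $1$ and $0$ to $0$. Hence $C(\triangledown f) = S(f)$, and combining with the previous paragraph we obtain $C(\triangledown f) = S(\triangledown f) = S(f)$.

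I do not expect any genuine obstacle: the lemma is a direct unfolding of the pointwise definition of $\triangledown$ on $\mathbf{3}^U$. The only points requiring a little care are to notice that $\triangledown$ is two-valued on $\mathbf{3}$ (so that the earlier remark on maps into $\{0,1\}$ applies and makes the equality $C(\triangledown f) = S(\triangledown f)$ immediate) and to keep the pointwise evaluation $(\triangledown f)(x) = \triangledown f(x)$ straight when computing the core. Alternatively, one could phrase the argument through the order-isomorphism $\varphi$ of Proposition~\ref{Prop:Correspondence} together with the formula ${\triangledown}(A,B) = (B,B)$ from Example~\ref{exa:3valued}, but the direct pointwise computation is shorter.
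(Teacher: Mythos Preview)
Your proof is correct and essentially identical to the paper's own argument: both use the pointwise definition of $\triangledown$ from Example~\ref{exa:3valued} to run the chain $x \in C(\triangledown f) \iff (\triangledown f)(x) = 1 \iff \triangledown f(x) = 1 \iff f(x) \geq u \iff x \in S(f)$, and both invoke the fact that $(\triangledown f)(x) \in \{0,1\}$ to get $C(\triangledown f) = S(\triangledown f)$. The only cosmetic difference is that you present these two steps in the opposite order.
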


\begin{proof} Let $x \in U$. Then,
\[
x \in C({\triangledown}f)  \iff ({\triangledown}f)(x) = 1 \iff {\triangledown}f(x) = 1 \iff f(x) \geq u \iff x \in S(f). 
\]
Because $({\triangledown}f)(x) \in \{0,1\}$ for all $x \in U$, $S({\triangledown}f) =  C({\triangledown}f)$. \qed
\end{proof}

Suppose $L$ is a lattice and $a,b \in L$. If there is a greatest element $z\in L$ such that $a \wedge z\leq b$, then this element $z$ is called 
the \emph{relative pseudocomplement of $a$ with respect to $b$} and is denoted by $a \Rightarrow b$. If $a \Rightarrow b$ exists, then it is unique. 
A \emph{Heyting algebra} $L$ is a lattice with $0$ in which $a \Rightarrow b$ exists for each $a,b \in L$. Heyting algebras are distributive lattices and 
any completely distributive lattice $L$ is a Heyting algebra in which 
\[ a \Rightarrow b = \bigvee \{ z \mid a \wedge z  \leq b\} .\]
Equationally Heyting algebras can be defined as lattices with $0$ and $\Rightarrow$ satisfying the identities \cite{BaDw74}:
\begin{enumerate}[({H}1)]
\item $x \wedge (x \Rightarrow y) = x \wedge y$,
\item $x \wedge (x \Rightarrow y) = x \wedge (x \wedge y \Rightarrow x \wedge z)$,
\item $z \wedge (x \wedge y \Rightarrow x) = z$.
\end{enumerate}

It is known \cite{Moisil63, Monteiro1980} that every three-valued {\L}ukasiewicz algebra forms a Heyting algebra where
\begin{equation} \label{eq:heyting_luka}
x \Rightarrow y = {\vartriangle}{\sim} x \vee y \vee ({\triangledown}{\sim} x \wedge {\triangledown} y).
\end{equation}

\begin{example} \label{exa:heyting}
The chain $\mathbf{3}$ is a Heyting algebra in which 
\[ a \Rightarrow b = \left \{
\begin{array}{ll}
1 & \mbox{if $a \leq b$},\\
b & \mbox{if $a > b$}. 
\end{array}
\right .
\]
Also ${\bf 3}^U$ is a Heyting algebra in which $\Rightarrow$ is defined pointwise:
\[ (f \Rightarrow g)(x) = f(x) \Rightarrow g(x) .\]
Since $\mathbf{3}^U$ and $\mathcal{A}(U)$ are isomorphic completely distributive lattices, 
$\mathcal{A}(U)$ is a Heyting algebra isomorphic to $\mathbf{3}^{U}$.

Let $x = (A,B)$ and $y = (C,D)$ be elements of $\mathcal{A}(U)$. We may use (\ref{eq:heyting_luka}) to infer $x \Rightarrow y$. Now
\begin{eqnarray*} 
 {\vartriangle}{\sim} x & = &  {\vartriangle}(B^c, A^c) = (B^c, B^c), \\
 {\triangledown}{\sim} x & = & {\triangledown}(B^c, A^c) = (A^c, A^c), \\
 {\triangledown}y & = & (D, D), \\
 {\triangledown}{\sim} x \wedge {\triangledown}y &=& (A^c \cap D, A^c \cap D),\\
 y \vee  ({\triangledown}{\sim} x \wedge {\triangledown}y) &=& (C \cup (A^c \cap D), D \cup (A^c \cap D)) = (C \cup (A^c \cap D), D),\\
 x \Rightarrow y &=& (B^c \cup C \cup (A^c \cap D), B^c \cup D).
\end{eqnarray*}
\end{example}

A De Morgan algebra $(L,\vee,\wedge,\sim,0,1)$ is a \emph{Kleene algebra} if for all $x,y\in L$,
\[
x \wedge {\sim} x \leq  y \vee {\sim} y.
\]
It is proved by Monteiro in \cite{Monteiro63} that every three-valued {\L}ukasiewicz algebra forms a Kleene algebra.
Note that $x \wedge {\sim} x  \leq u \leq y \vee {\sim} y$ for $x,y \in \mathbf{3}$. Obviously,  $\mathbf{3}^U$ and $\mathcal{A}(U)$ are isomorphic Kleene algebras
via $\varphi$. Note that already in \cite{KumBan2017} it is proved that $\mathbf{3}^U$ and $\wp(U)^{[2]} = \mathcal{A}(U)$ are isomorphic Kleene algebras. 

According to R.~Cignoli \cite{Cign86} a \emph{quasi-Nelson algebra} is defined as Kleene algebra $(A,\vee,\wedge,{\sim},0,1)$ where for each pair $a,b\in A$ the relative
pseudocomplement 
\begin{equation} \label{eq:nelson}
a \Rightarrow ({\sim} a \vee b)
\end{equation} 
exists. This means that every Kleene algebra whose underlying lattice is a Heyting algebra forms a quasi-Nelson algebra. In a quasi-Nelson algebra, the element 
(\ref{eq:nelson}) is denoted simply by $a\rightarrow b$. 

As shown by D.~Brignole and A.~Monteiro \cite{BrigMont1967}, the operation $\rightarrow$ satisfies the identities:
\begin{enumerate}[({N}1)]
\item $a \to a =1$, 
\item $({\sim} a \vee b) \wedge (  a \to b)  = {\sim} a \vee b$,
\item $a \wedge (a \to b)  = {\sim} a\vee b$,
\item $a \to (b\wedge c) = (a \to b)  \wedge (a \to c)$.
\end{enumerate}
A \emph{Nelson algebra} is a quasi-Nelson algebra satisfying the identity%

\begin{enumerate}[({N}5)]
\item $( a \wedge b)  \to c = a \to (b \to c)$.
\end{enumerate}
It is shown in \cite{BrigMont1967} that a Nelson algebra can be defined also as an algebra $(L,\vee,\wedge,\to,\sim,0,1)$, where
$(L,\vee,\wedge,\sim,0,1)$ is a Kleene algebra, and the binary operation $\to$ satisfies (N1)--(N5).  A Nelson algebra  is \emph{semisimple} if 
\begin{enumerate}[({N}6)]
\item $a \vee  (a \to 0) = 1$.
\end{enumerate}
It is known that  every three-valued {\L}ukasiewicz algebra defines a semisimple Nelson algebra by
setting 
\[ a \to b = {\triangledown}{\sim}a \vee b .\]
Similarly, each semisimple Nelson algebra defines a  three-valued {\L}ukasiewicz algebra by setting
\[ {\triangledown} a = {\sim} a \to 0 .\] 
In fact, the notions of  three-valued {\L}ukasiewicz algebra and semisimple Nelson algebra coincide  \cite{Monteiro1980}.

\begin{example} \label{exa:nelson}
The Kleene algebra defined on $\mathbf{3}$ forms also a Nelson algebra in which the operation $\to$ is defined as in the following table \cite{Rasiowa74}:
\[
\begin{tabular}{c|ccc}
$\to$ & $0$ & $u$ & $1$\\ \hline
$0$   & $1$ & $1$ & $1$ \\
$u$   & $1$ & $1$ & $1$ \\
$1$   & $0$ & $u$ & $1$
\end{tabular}
\]
The operation $\to$ is defined in $\mathbf{3}^U$ pointwise by $(f \to g)(x) = f(x) \to g(x)$. Note also that we can write 
\begin{equation} \label{eq:nelson_implication}
 (f \to g)(x) = f(x) \Rightarrow ({\sim} f(x) \vee g(x)) 
\end{equation}
It can be seen in the above table that the Nelson algebra $\mathbf{3}$ is semisimple. Therefore, also $\mathbf{3}^U$ forms a semisimple Nelson algebra. Because
$\mathbf{3}^U$ and $\mathcal{A}(U)$ are isomorphic as Heyting algebra (recall that if the operation $\Rightarrow$ exists, it is unique) and as Kleene algebras,
by (\ref{eq:nelson_implication}) we have that they are isomorphic also as semisimple Nelson algebras.

There are a couple of possibilities how we can derive the outcome of the operation $(A,B) \to (C,D)$ in $\mathcal{A}(U)$. We can either use (\ref{eq:nelson}) or
$a \to b = {\triangledown}{\sim}a \vee b$. It appears that the latter is simpler to apply here. For $(A,B), (C,D) \in \mathcal{A}(U)$, we have that
\[ {\sim} (A,B) = (B^c, A^c), \quad \triangledown (A,B) = (B,B), \quad \triangledown {\sim} (A,B) = (A^c, A^c).\]
Therefore,
\[ (A,B) \to (C,D) = (A^c \cup C, A^c \cup D).\] 
\end{example}

An algebra $(L, \vee, \wedge, {^*}, 0)$ is a $p$-algebra if $(L, \vee, \wedge, 0)$ is a bounded lattice and $^*$
is a unary operation on $L$ such that $x \wedge z = 0$ iff $z \leq x^*$. The element $x^*$ is the \emph{pseudocomplement} of $x$. 
It is well known that $x \leq y$ implies $x^* \geq y^*$. We also have for $x,y \in L$, 
\[
x^* = x^{***}, \quad (x \vee y)^* = x^* \wedge y^*, \quad (x \wedge y)^{**} = x^{**} \wedge y^{**}.
\]
Equationally $p$-algebras can be defined as lattices with $0$ such that the following identities hold \cite{Blyth}:
\begin{enumerate}[({P}1)]
\item $x \wedge (x \wedge y)^* = x \wedge y^*$,
\item $x \wedge 0^* = x$,
\item $0^{**} = 0$.
\end{enumerate}
Note that (P2) means that $0^*$ is the greatest element and we may denote it by $1$. Therefore, it is possible to include $1$ also
to the signature of a $p$-algebra.

An algebra $(L, \vee, \wedge, {^*}, {^+}, 0, 1)$ is a double $p$-algebra if $(L, \vee, \wedge , ^*, 0)$ is
a $p$-algebra and $(L, \vee, \wedge, {^+}, 1)$ is a dual $p$-algebra  (i.e.\, $z \geq x^+$ iff $x \vee z = 1$ for all $x,y \in L$).
The element $x^+$ is the \emph{dual pseudocomplement} of $a$. If $x \leq y$, then $x^+ \geq y^+$.
In addition, 
\[x^+ = x^{+++},\quad
(x \wedge y)^+ = x^+ \vee y^+,\quad 
(x \vee y)^{++} = x^{++} \vee y^{++}. 
\]
Note that by definition $x \leq x^{**}$ and $x^{++} \leq x$. Therefore, in a double $p$-algebra $x^{++} \leq x^{**}$.

\begin{example}\label{exa:3pseudo}
On $\mathbf{3}$ the operations $^*$ and $^+$ are defined as in the following table:
\begin{center}
\begin{tabular}{c|cc}
$x$ & $x^*$ & $x^+$\\ \hline
$0$ & $1$ & $1$ \\
$u$ & $0$ & $1$ \\
$1$ & $0$ & $0$
\end{tabular}
\end{center}
For a map $f \in \mathbf{3}^{U}$ the functions $f^*$ and $f^+$ are defined pointwise, that is,
\[  (f^*)(x) =   f(x)^* \quad \mbox{and}  \quad (f^+)(x)  = f(x)^+ .\]
\end{example}

\begin{lemma} \label{lem:support_star} 
If $f  \in \mathbf{3}^U$, then
\[ C(f^*) = S(f^*) = S(f)^c \qquad \mbox{and} \qquad  C(f^+) = S(f^+) = C(f)^c \] 
\end{lemma}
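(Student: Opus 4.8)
The plan is to prove each of the four equalities by a direct pointwise computation, exactly in the style of the preceding lemmas (Lemma~\ref{lem:support_demorgan}, Lemma~\ref{lem:support_triangle}), using only the definitions of $C$ and $S$ together with the table in Example~\ref{exa:3pseudo}. First I would observe that, since $f^*(x) = f(x)^* \in \{0,1\}$ for every $x \in U$ (because $0^* = 1$ and $u^* = 1^* = 0$), we automatically get $C(f^*) = S(f^*)$; the same remark applies to $f^+$, whose values lie in $\{0,1\}$ as well. This reduces each line to a single identity.

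For the first identity, I would compute for $x \in U$:
\[
x \in C(f^*) \iff f^*(x) = 1 \iff f(x)^* = 1 \iff f(x) = 0 \iff x \notin S(f) \iff x \in S(f)^c,
\]
where the third equivalence is read off from the table in Example~\ref{exa:3pseudo} (the only value of $\mathbf{3}$ with $x^* = 1$ is $0$), and the fourth uses the definition $S(f) = \{x \mid f(x) \geq u\}$. Together with $C(f^*) = S(f^*)$ noted above, this gives $C(f^*) = S(f^*) = S(f)^c$. Dually, for the second identity:
\[
x \in C(f^+) \iff f^+(x) = 1 \iff f(x)^+ = 1 \iff f(x) \in \{0,u\} \iff x \notin C(f) \iff x \in C(f)^c,
\]
using that $0^+ = u^+ = 1$ and $1^+ = 0$, and the definition $C(f) = \{x \mid f(x) = 1\}$; combined with $C(f^+) = S(f^+)$ this yields $C(f^+) = S(f^+) = C(f)^c$.

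There is essentially no obstacle here: the argument is a routine unwinding of definitions against a three-row truth table, entirely parallel to Lemmas~\ref{lem:support_demorgan} and~\ref{lem:support_triangle}. The only point requiring a moment's care is noticing at the outset that $f^*$ and $f^+$ are two-valued, so that the core and support coincide and one does not need to argue about the $u$-value separately; everything else is immediate. Alternatively, one could derive these identities without recomputing, by using the known equations $f^* = {\vartriangle}{\sim}f$ and $f^+ = {\triangledown}{\sim}f$ in $\mathbf{3}^U$ (valid pointwise since they hold in $\mathbf{3}$) together with Lemma~\ref{lem:support_demorgan}, Lemma~\ref{lem:support_triangle}, and the analogous facts for ${\vartriangle}$; but the direct pointwise computation is shorter and more self-contained, so that is the route I would take.
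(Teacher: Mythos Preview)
Your proposal is correct and matches the paper's own proof essentially line for line: the paper also computes the chain $x \in C(f^*) \iff f(x)^* = 1 \iff f(x) = 0 \iff x \in S(f)^c$ (and the dual chain for $f^+$), and uses the observation that $f^*(x), f^+(x) \in \{0,1\}$ to conclude $C = S$ in each case. The only cosmetic difference is that the paper writes $f(x) \leq u$ where you write $f(x) \in \{0,u\}$.
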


\begin{proof}
Let $x \in U$. Then,
\[
 x \in C(f^*)  \iff (f^*)(x) = 1 \iff f(x)^* = 1 \iff f(x) = 0 \iff x \notin S(f) \iff x \in S(f)^c.
\]
Because $f^*(x) \in \{0,1\}$ for all $x \in U$, $S(f^*) = C(f^*)$. Similarly,
\[
x \in C(f^+) \iff (f^+)(x) = 1 \iff f(x)^+ = 1 \iff f(x) \leq u \iff x \notin C(f)  \iff x \in C(f)^c.
\]
Since $f^+(x) \in \{0,1\}$ for all $x \in U$, $S(f^+) = C(f^+)$. \qed
\end{proof}

A \emph{pseudocomplemented De Morgan algebra} is an algebra $(L,\vee,\wedge,\sim,^{\ast},0,1)$ such that $(L,\vee,\wedge,\sim,0,1)$ is a
De Morgan algebra and $(L,\vee,\wedge,^{\ast},0,1)$ is a $p$-algebra. Such an algebra always forms a double $p$-algebra, where the pseudocomplement
operations determine each other by
\begin{equation} \label{eq:dual_pseudo}
\sim x^{\ast}=(\sim x)^{+}\mbox{ and }\sim x^{+}=(\sim x)^{\ast}.
\end{equation}

We say that a double $p$-algebra is \emph{regular} if it satisfies the condition
\begin{equation} \label{eq:regular} 
x^* = y^* \mbox{ and } x^+ = y^+ \mbox{ imply } x=y. 
\end{equation}
T.~Katri\v{n}\'{a}k \cite{Kat73} has shown that any regular double pseudocomplemented lattice forms a Heyting algebra such that
\begin{equation} \label{Eq:Katrinak}
a \Rightarrow b = (a^* \vee b^{**}) \wedge [(a\vee a^*)^{+} \vee a^*\vee b \vee b^*]. 
\end{equation}

A $p$-algebra $(L,\vee,\wedge,^*,0,1)$ is a \emph{Stone algebra} if $L$ is distributive and for all $x \in L$,
\begin{equation} \label{Eq:Stone}
x^{\ast}\vee x^{\ast\ast}=1.
\end{equation}
A \emph{double Stone algebra} is a distributive double $p$-algebra $(L, \vee, \wedge, {^*}, {^+}, 0, 1)$ satisfying 
(\ref{Eq:Stone}) and 
\begin{equation} \label{Eq:DualStone}
x^+ \wedge x^{++} = 0.
\end{equation}

\begin{example}\label{exa:double_stone}
As a distributive double $p$-algebra, $\mathbf{3}$ forms a double Stone algebra, because $x^*$ or $x^{**}$ equals 1 for any $x \in \mathbf{3}$,
and $x^+$ or $x^{++}$ is 0. From the table of Example~\ref{exa:3pseudo} we can see that (\ref{eq:regular}) holds in $\mathbf{3}$, meaning that
$\mathbf{3}$ is a regular double Stone algebra. This also implies that $\mathbf{3}^U$ forms a regular double Stone algebra.  

Because $\mathcal{A}(U)$ is isomorphic to $\mathbf{3}$, also $\mathcal{A}(U)$ is a double Stone algebra in which
\[ (A,B)^* = (B^c,B^c) \qquad \mbox{and} \qquad (A,B)^+ = (A^c,A^c). \]
\end{example}

It is known that every regular double Stone algebra $(L,\vee,\wedge,{^*},{^+},0,1)$ defines a three-valued {\L}ukasiewicz algebra 
$(L,\vee,\wedge,{\sim}, {\triangledown},0,1)$ by setting
\[
  {\triangledown} a = a^{**} \qquad \mbox{and} \qquad {\sim} a = a^* \vee (a \wedge a^+) .
\]
Similarly, each  three-valued {\L}ukasiewicz algebra defines a double Stone algebra by 
\[
 a^* = {\sim}{\triangledown}a \qquad \mbox{and} \qquad a^+ = {\triangledown}{\sim}a.
\]
These pseudocomplement operations determine each other by (\ref{eq:dual_pseudo}).
The correspondence between  regular double Stone algebras and   three-valued {\L}ukasiewicz algebras is one-to-one; see \cite{Boicescu91} for details and further references. 
Note that this means that also regular double Stone algebras and semi-simple Nelson algebras coincide.

\begin{example}
On $\mathbf{3}^{U}$ the operations $^{\ast},^{+},\rightarrow$ and ${\triangledown}$ can be defined as follows
in terms of the core and support of the functions, cf. Lemmas \ref{lem:support_triangle} and \ref{lem:support_star}.
For $f,g\in\mathbf{3}^{U}$,
\begin{eqnarray*}
& f^*(x) =  \left\{
\begin{array}{ll}
1 & \mbox{if $x\notin S(f)$,}\\
0 & \mbox{otherwise;}
\end{array} \right .
\qquad\qquad\qquad
& f^{+}(x) =  \left\{
\begin{array}{ll}
1 & \mbox{if $x \notin C(f)$,}\\
0 & \mbox{otherwise;}
\end{array} 
\right.
\\[2mm]
& ({\triangledown} f)(x) = \left\{
\begin{array}{ll}
1 &\mbox{if $x\in S(f)$,}\\
0 &\mbox{otherwise;}
\end{array} \right.
\qquad\qquad\qquad
& (f \rightarrow g)(x)  =  \left\{
\begin{array}{ll}
1    & \mbox{if $x\notin C(f)$,}\\
g(x) & \mbox{otherwise.}
\end{array}
\right. 
\end{eqnarray*}
\end{example}

The following proposition shows how in the presence of $\sim$, all operations 
$^*$, $^+$, ${\triangledown}$, ${\vartriangle}$, $\to$, $\Rightarrow$ are defined in terms of \emph{one} of them.

\begin{proposition} \label{Prop:ImplyLuka}
Let $\mathcal{F}$ be a complete polarity sublattice of $\mathbf{3}^U$. If $\mathcal{F}$ is closed with respect to at least one of the operations
$^*$, $^+$, ${\triangledown}$, ${\vartriangle}$, $\to$, $\Rightarrow$ defined in  $\mathbf{3}^U$, then  $\mathcal{F}$
is closed with respect to all these operations.
\end{proposition}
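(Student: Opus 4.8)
The plan is to show that each of the operations $^*$, $^+$, ${\triangledown}$, ${\vartriangle}$, $\to$, $\Rightarrow$ can be expressed in terms of any single one of them together with the lattice operations $\vee$, $\wedge$ and the polarity $\sim$, which are already available in $\mathcal F$ by hypothesis (recall $\mathcal F$ is a complete polarity sublattice, so it is closed under $\vee$, $\wedge$, arbitrary joins and meets, and $\sim$, and it contains $\bot$ and $\top$ since a complete sublattice must). Since $\mathbf 3^U$ carries all of the algebraic structures discussed in Section~\ref{Sec:algebras} (Heyting algebra, three-valued {\L}ukasiewicz algebra, semisimple Nelson algebra, regular double Stone algebra), the relevant defining identities hold pointwise, and I will simply quote them.

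First I would establish the mutual definability of the {\L}ukasiewicz operators and the double-Stone operators. We have ${\triangledown} f = f^{**}$ and $f^* = {\sim}{\triangledown} f$, and dually ${\vartriangle} f = {\sim}{\triangledown}{\sim} f$ and $f^+ = {\triangledown}{\sim} f$; moreover ${\triangledown} f = f^{**} = ({\sim}({\sim} f)^*)$ by (\ref{eq:dual_pseudo})-type manipulation, and ${\triangledown} f = {\sim}(f^+)$ as well. So from closure under any one of $\{^*, ^+, {\triangledown}, {\vartriangle}\}$, together with $\sim$, we get closure under all four: e.g. if $\mathcal F$ is closed under $^+$, then $f^* = {\sim}((\sim f)^+)$ by (\ref{eq:dual_pseudo}), ${\triangledown} f = {\sim}(f^+)$, and ${\vartriangle} f = {\sim}({\triangledown}{\sim} f) = {\sim}{\sim}(({\sim} f)^+) = ({\sim} f)^+$ — each formula uses only $\sim$ and $^+$; the other three starting points are symmetric.

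Next I would connect this cluster to the two implications. The Nelson implication satisfies $f \to g = {\triangledown}{\sim} f \vee g$, so closure under $\triangledown$ (hence under the whole cluster) plus $\sim$ and $\vee$ gives closure under $\to$; conversely ${\triangledown} f = {\sim}({\sim} f \to \bot)$ since $a \to 0 = {\triangledown}{\sim} a$ (the semisimple Nelson / {\L}ukasiewicz correspondence), so closure under $\to$ (and $\sim$, $\bot$) recovers $\triangledown$. For the Heyting implication, formula (\ref{eq:heyting_luka}) gives $f \Rightarrow g = {\vartriangle}{\sim} f \vee g \vee ({\triangledown}{\sim} f \wedge {\triangledown} g)$, so the cluster plus $\sim$, $\vee$, $\wedge$ yields closure under $\Rightarrow$; conversely ${\sim}{\triangledown} f = f^* = f \Rightarrow \bot$ (the pseudocomplement is $a \Rightarrow 0$ in any Heyting algebra), so ${\triangledown} f = {\sim}(f \Rightarrow \bot)$, recovering the cluster from $\Rightarrow$. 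Assembling these implications: each of the six operations lets us define $\triangledown$ (using only $\sim$, $\vee$, $\wedge$, $\bot$, $\top$, all available), and $\triangledown$ together with $\sim$, $\vee$, $\wedge$ lets us define all six; hence closure under any one forces closure under all.

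The only real point requiring care — and the step I expect to be the mild obstacle — is bookkeeping which auxiliary constants and operations are genuinely available in $\mathcal F$: I must confirm $\bot, \top \in \mathcal F$ (true since $\mathcal F$ is a complete sublattice, so $\bot = \bigwedge \mathcal F$... no, rather $\bot = \bigwedge \mathbf 3^U$ need not lie in $\mathcal F$ a priori — but a complete sublattice by the convention in the excerpt contains the empty meet $\top$ and empty join $\bot$, or equivalently one notes $\bot = f \wedge {\sim} f$ is false in general; instead use that $\mathcal F$ being a complete sublattice of $\mathbf 3^U$ includes $\bigvee \emptyset = \bot$ and $\bigwedge \emptyset = \top$). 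Once the constants are secured, every formula above is an identity already valid in $\mathbf 3^U$, so applying it to elements of $\mathcal F$ and using closure of $\mathcal F$ under the operations on the right-hand side places the result back in $\mathcal F$. No nontrivial computation is needed beyond quoting the stated identities (\ref{eq:dual_pseudo}), (\ref{eq:heyting_luka}), the Nelson formula $a \to b = {\triangledown}{\sim} a \vee b$, and the double-Stone/{\L}ukasiewicz translations.
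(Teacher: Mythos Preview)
Your proposal is correct and follows essentially the same route as the paper: both arguments secure $\bot,\top\in\mathcal F$ from completeness and then use the identities relating $^*,^+,\triangledown,\vartriangle,\to,\Rightarrow$ (via $\sim$, the lattice operations, and the constants) to show that closure under any one forces closure under the rest. Two small slips to fix: the recovery of $\triangledown$ from $\to$ should read $\triangledown f = {\sim}f \to \bot$ (your own parenthetical $a\to 0 = \triangledown{\sim}a$ gives this directly, so drop the extra outer ${\sim}$), and the formula ``$\triangledown f = f^{**} = {\sim}(({\sim}f)^*)$'' is miscomputed---by (\ref{eq:dual_pseudo}) one has ${\sim}(({\sim}f)^*) = f^+$, not $f^{**}$; just use $\triangledown f = f^{**}$ and $f^{**} = {\sim}({\sim}(f^*))^+$ or similar, or simply invoke the Stone/\L ukasiewicz translation $a^* = {\sim}\triangledown a$ directly.
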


\begin{proof}
Let us first note that since $\mathcal{F}$ is a sublattice of $\mathbf{3}^U$, it is distributive. In addition, the least element $\bot$ and the greatest element
$\top$ of $\mathbf{3}^U$ are in $\mathcal{F}$, because $\mathcal{F}$ is a complete sublattice of $\mathbf{3}^U$. 

We have noticed that $^*$ and $^+$ fully determine each other in the presence of $\sim$ and together they define $\Rightarrow$ by (\ref{Eq:Katrinak}).
When $\mathcal{F}$ is closed under $^*$ or $^+$ of $\mathbf{3}^U$, the regularity condition (\ref{eq:regular}) holds and Stone identities
(\ref{Eq:Stone}) and (\ref{Eq:DualStone}) are valid. Hence, $\mathcal{F}$ forms a regular double Stone algebra.
We have seen that each regular double Stone algebra defines a semisimple Nelson algebra and a three-valued {\L}ukasiewicz algebra.
Therefore, if $\mathcal{F}$ is closed with respect to $^*$ or $^+$, it is closed with respect to all of the mentioned operations.

Similarly, ${\triangledown}$ and ${\vartriangle}$ define each other in terms of $\sim$. As we know, three-valued {\L}ukasiewicz algebras uniquely determine
semisimple Nelson algebras and regular double Stone algebras. Therefore, if $\mathcal{F}$ is closed with respect to ${\triangledown}$ or ${\vartriangle}$ of
$\mathbf{3}^U$, it is closed with respect to $^*$, $^+$, $\to$. Additionally, $\sim$, ${\triangledown}$ and ${\vartriangle}$ determine $\Rightarrow$ by (\ref{eq:heyting_luka}).

If $\mathcal{F}$ is closed with respect to $\to$ of $\mathbf{3}^U$, then it forms a semisimple Nelson algebra, which in turn defines uniquely a regular
double Stone algebra and a three-valued {\L}ukasiewicz algebra. Thus, $\mathcal{F}$ is closed with respect to all of the mentioned operations.

Finally, let $\mathcal{F}$ be closed with respect to $\Rightarrow$. Because $\bot \in \mathcal{F}$, $f^*$ is defined by $f \Rightarrow \bot$ for each
$f \in \mathcal{F}$. From this we get that  $\mathcal{F}$ is closed with respect to $^*$, $^+$, ${\triangledown}$, ${\vartriangle}$, $\to$. \qed
\end{proof}

We end this section by noting that the map $\varphi$ defined in Proposition~\ref{Prop:Correspondence} preserves all operations considered in this section.
Indeed, let $f \in \mathbf{3}^U$. We have already noted that $\varphi({\sim}f) = {\sim}\varphi(f)$. Now 
\[ \varphi(f^*) = (C(f^*),S(f^*)) = (S(f)^c,S(f)^c) = (C(f),S(f))^* = \varphi(f)^* .\]
As we have seen, the operations $^+$, ${\triangledown}$, ${\vartriangle}$, $\to$, $\Rightarrow$ can be defined in terms of $\vee$, $\wedge$,
$\sim$, and $^*$, so they are preserved with respect to $\varphi$.

\section{Alexandrov topologies defined by complete sublattices of $\mathbf{3}^U$}\label{sec:Alex}

An \emph{Alexandrov topology} \cite{Alex37,Birk37} $\mathcal{T}$ on $U$ is a topology in which also intersections of  open sets are open, 
or equivalently, every point $x \in U$ has the \emph{least neighbourhood} $N(x) \in \mathcal{T}$.
For an Alexandrov topology $\mathcal{T}$, the least neighbourhood  of $x$ is $N(x)= \bigcap \{ B \in \mathcal{T} \mid x \in B \}$. 
Each Alexandrov topology $\mathcal{T}$ on $U$ defines a quasiorder $\leq_\mathcal{T}$ on $U$ by $x \, \leq_\mathcal{T} \, y$ if and only if
$y \in N(x)$  for all $x, y \in U$. On the other hand, for a quasiorder $\leq$ on $U$, the set of all $\leq$-closed subsets of $U$ forms an Alexandrov topology
$\mathcal{T}_\leq$, that is, $B \in \mathcal{T}_\leq$ if and only if $x \in B$ and $x \leq y$ imply $y \in B$. 
Let $[ x ) = \{ y \in X \mid x \leq y \}$. In $\mathcal{T}_\leq$, $N(x) = [x)$ for any $x \in U$.  
The correspondences $\mathcal{T} \mapsto {\leq_\mathcal{T}}$ and ${\leq} \mapsto \mathcal{T}_\leq$ are
mutually invertible bijections between the classes of all Alexandrov topologies and of all quasiorders on the set $U$.

Let $\leq$ be a quasiorder on $U$. We denote its inverse by $\geq$. Obviously, also $\geq$ is a quasiorder and we denote its Alexandrov topology by
$\mathcal{T}_\geq$. We say that topologies $\mathcal{T}_1$ and $\mathcal{T}_2$ are \emph{dual} if
\[ X \in \mathcal{T}_1 \iff X^c \in \mathcal{T}_2 .\]
The topologies $\mathcal{T}_\leq$ and $\mathcal{T}_\geq$ are mutually dual. The smallest neighbourhood of a point $x \in U$ 
in $\mathcal{T}_\geq$ is $(x] = \{ y \in X \mid x \geq y \}$.

For the sake of completeness, we prove the following claim.

\begin{lemma} \label{lem:inverse_relation}
Let $\mathcal{T}_1$ and $\mathcal{T}_2$ be dual topologies and let $\leq_1$ and $\leq_2$ be the corresponding quasiorders, respectively.
Then ${\leq}_1 = {\geq}_2$. 
\end{lemma}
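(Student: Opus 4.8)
The plan is to unwind the definitions on both sides. Let $\mathcal{T}_1$ and $\mathcal{T}_2$ be dual, so $X \in \mathcal{T}_1 \iff X^c \in \mathcal{T}_2$ for all $X \subseteq U$. I want to show $x \leq_1 y \iff x \geq_2 y$, i.e. $x \leq_1 y \iff y \leq_2 x$, for all $x,y \in U$. Recall from the excerpt that for any Alexandrov topology $\mathcal{T}$ with associated quasiorder $\leq_\mathcal{T}$, one has $x \leq_\mathcal{T} y \iff y \in N(x)$, where $N(x) = \bigcap\{B \in \mathcal{T} \mid x \in B\}$ is the least neighbourhood; equivalently $x \leq_\mathcal{T} y$ holds iff \emph{every} open set containing $x$ also contains $y$. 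I would use this ``every open set'' formulation since it avoids having to manipulate the least neighbourhoods directly.

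First I would fix $x, y \in U$ and assume $x \leq_1 y$. This means: for every $B \in \mathcal{T}_1$, if $x \in B$ then $y \in B$. I need to deduce $y \leq_2 x$, i.e. for every $C \in \mathcal{T}_2$, if $y \in C$ then $x \in C$. So take $C \in \mathcal{T}_2$ with $y \in C$. By duality $C^c \in \mathcal{T}_1$. Now suppose for contradiction that $x \notin C$, i.e. $x \in C^c$. Then since $x \leq_1 y$ and $C^c$ is an open set of $\mathcal{T}_1$ containing $x$, we get $y \in C^c$, contradicting $y \in C$. Hence $x \in C$, and since $C$ was arbitrary, $y \leq_2 x$, that is, $x \geq_2 y$. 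The converse direction is entirely symmetric: duality is a symmetric relation ($X^c \in \mathcal{T}_2 \iff X \in \mathcal{T}_1$ is the same as $Y \in \mathcal{T}_2 \iff Y^c \in \mathcal{T}_1$, replacing $Y = X^c$), so the same argument with the roles of $\mathcal{T}_1,\mathcal{T}_2$ and $\leq_1,\leq_2$ swapped gives $x \geq_2 y \implies x \leq_1 y$. Therefore ${\leq}_1 = {\geq}_2$.

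There is really no serious obstacle here; the only point requiring a little care is making sure the contrapositive/complementation step is handled cleanly, namely that ``$x$ in no proper-subset-avoiding open set'' translates correctly under taking complements — this is why I phrased the argument via ``$y \in C$, $C$ open in $\mathcal{T}_2$'' and passed to $C^c$ open in $\mathcal{T}_1$. An alternative, perhaps cleaner, route is to work with least neighbourhoods directly: one shows $N_2(x) = \bigcap\{C \in \mathcal{T}_2 \mid x \in C\}$, and using duality that $y \in N_2(x) \iff$ every $\mathcal{T}_1$-closed set containing $x$ contains $y$; but since the excerpt's primitive characterisation of $\leq_\mathcal{T}$ is ``$y$ lies in every open set containing $x$'', the direct argument above is the most economical and I would present that one.
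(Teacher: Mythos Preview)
Your proof is correct and follows essentially the same approach as the paper's: both use the open-set characterization of $\leq_{\mathcal{T}}$ and pass between $\mathcal{T}_1$ and $\mathcal{T}_2$ via complements to obtain a contradiction. The only cosmetic difference is that the paper starts the contradiction from the assumption $x \ngeq_2 y$ and produces a $\mathcal{T}_1$-open set witnessing $x \nleq_1 y$, whereas you start from an arbitrary $\mathcal{T}_2$-open set containing $y$ and show it must contain $x$; these are contrapositives of one another.
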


\begin{proof} Suppose $x \leq_1 y$, that is, $x \in \bigcap\{ Y \in \mathcal{T}_1 \mid y \in Y\}$. If $x \ngeq_2 y$, that is, $y \nleq_2 x$, then
$y \notin  \bigcap\{ X \in \mathcal{T}_2 \mid x \in X\}$. This means that there is $X \in \mathcal{T}_2$ such that $x \in X$ and $y \notin X$.
Because $\mathcal{T}_2$ is the dual topology of $\mathcal{T}_1$, then there is $Y \in \mathcal{T}_1$ such that $X = Y^c$. This means
that $y \in Y$ and $x \notin Y$. Therefore, $x \notin  \bigcap\{ Y \in \mathcal{T}_1 \mid y \in Y\}$, a contradiction. Thus,  $x \geq_2 y$ holds,
and  $x \leq_1 y$ implies  $x \geq_2 y$. Symmetrically we can show that $x \geq_2 y$ implies  $x \leq_1 y$, which completes the proof. \qed
\end{proof}

Let us now recall from \cite{JRV09} how Alexandrov topologies relate to rough set approximations. Let $\leq$ be a quasiorder on $U$. Then for any
$X \subseteq U$, 
\[ X^\UP = \{ x \in U \mid [x) \cap X \neq \emptyset\} \quad \mbox{and} \quad X^\DOWN = \{ x \in U \mid [x) \subseteq X \}.  \]
Let us denote $\wp(U)^\UP  = \{ X^\UP \mid X \subseteq U\}$ and $\wp(U)^\DOWN  = \{ X^\DOWN \mid X \subseteq U\}$.
Then,
\begin{equation} \label{eq:rough_from_alex}
\mathcal{T}_\leq = \wp(U)^\DOWN  \qquad \mbox{and}  \qquad \mathcal{T}_\geq = \wp(U)^\UP .
\end{equation}
In particular, $(x] = \{x\}^\UP$ for all $x \in U$.

\begin{remark} \label{rem:quasi_other_definitions}
There are also other choices for lower-upper approximation pairs defined in terms of a quasiorder $\leq$ on $U$. In \cite{KumBan2012,KumBan2015}
Kumar and Banerjee define the operators $\mathsf{L}$ and $\mathsf{U}$ by 
\[ 
\mathsf{L}(X) = \bigcup \{ D \in \mathcal{T}_\leq \mid D \subseteq X \} \quad \mbox{and} \quad
\mathsf{U}(X) = \bigcap \{ D \in \mathcal{T}_\leq \mid X \subseteq D \} \]
for any $X \subseteq U$. The sets $\mathsf{L}(X)$ and $\mathsf{U}(X)$ belong to the same topology $\mathcal{T}_\leq$, whose
elements are called `definable'. These operators can be also be written in form
\[ \mathsf{L}(X) = \{ x \in U \mid [x) \subseteq X\} \quad \mbox{and} \quad \mathsf{U}(X) = \{ x \in U \mid (x] \cap X \neq \emptyset \}. \]

This approach differs significantly from ours, because now the rough set system
\[ \mathcal{RS}' = \{ ( \mathsf{L}(X), \mathsf{U}(X) ) \mid X \subseteq U \} \]
is not generally a lattice with respect to the coordinatewise order.
Indeed, let $U = \{a,b,c\}$ and let the quasiorder $\leq$ on $U$ be defined by $[a) = \{a\}$, $[b) = \{a,b\}$ and $[c) = U$. The $\leq$-closed subsets
form an Alexandrov topology
\[ \mathcal{T}_\leq = \{ \emptyset, \{a\}, \{a,b\}, U \},  \]
and
\[\mathcal{RS}' = \{ (\emptyset,\emptyset), (\{a\}, \{a\}), (\emptyset,\{a,b\}), (\emptyset,U), (\{a,b\},\{a,b\}), (\{a\},U), (U,U) \}. \]
Now $(\{a\}, \{a\})$ and $(\emptyset,\{a,b\})$ have minimal upper bounds $(\{a,b\},\{a,b\})$ and $(\{a\},U)$, but
not a least one. Thus, $\mathcal{RS}'$ is not a lattice.

For a quasiorder $\leq$ on $U$, a pair of rough approximations is defined in \cite[Theorem~4]{Ganter2008} for any $X \subseteq U$ by
\[ \underline{X} = \{ x \in U \mid (\forall y \leq x) \, y \in X\}
\quad \mbox{and} \quad
\overline{X} = \{ x \in U \mid (\exists y \geq x) \, y \in X \} . \]
This means that
\[ \underline{X} = \{ x \in U \mid (x] \subseteq X\} \quad \mbox{and} \quad \overline{X} = \{ x \in U \mid [x) \cap X \neq \emptyset \} \]
for all $X \subseteq U$. Obviously, $\underline{X}$ and $\overline{X}$ belong to the same Alexandrov topology $\mathcal{T}_\geq$. This also means that
$\{ (\underline{X},\overline{X} ) \mid X \subseteq U \}$ does not necessarily form a lattice.
\end{remark}  
  
\begin{lemma} \label{lem:inducesAlex}
Let $\mathcal{F}$ be a complete sublattice of $\mathbf{3}^U$.
\begin{enumerate}[\rm (a)]
\item $C(\mathcal{F}) := \{ C(f) \mid f  \in \mathcal{F} \}$ and $S(\mathcal{F}) := \{ S(f) \mid f  \in \mathcal{F} \}$ are Alexandrov topologies on $U$.
\item If $\mathcal{F}$ is closed with respect to $\sim$, then $C(\mathcal{F})$ and $S(\mathcal{F})$ are dual.
\item If $\mathcal{F}$ is a three-valued {\L}ukasiewicz subalgebra of $\mathbf{3}^U$, then $C(\mathcal{F}) = S(\mathcal{F})$ is a Boolean lattice.
\end{enumerate}
\end{lemma}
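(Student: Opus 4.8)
The plan is to derive all three parts from three facts already in hand: the order-isomorphism $\varphi$ of Proposition~\ref{Prop:Correspondence} and its consequence Lemma~\ref{lem:ContinuousMeetsJoins} (cores and supports carry arbitrary joins to unions and arbitrary meets to intersections), the pointwise description of $\sim$ in Lemma~\ref{lem:support_demorgan}, and the description of $\triangledown$ in Lemma~\ref{lem:support_triangle}.

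For part~(a), I would use that a family of subsets of $U$ is an Alexandrov topology precisely when it contains $\emptyset$ and $U$ and is closed under arbitrary unions and arbitrary intersections. Since $\mathcal{F}$ is a \emph{complete} sublattice of $\mathbf{3}^{U}$, we have $\bot,\top\in\mathcal{F}$, whence $\emptyset=C(\bot)$ and $U=C(\top)$ lie in $C(\mathcal{F})$. For any $\mathcal{G}\subseteq\mathcal{F}$ we have $\bigvee\mathcal{G},\bigwedge\mathcal{G}\in\mathcal{F}$, so Lemma~\ref{lem:ContinuousMeetsJoins} gives $\bigcup\{C(f)\mid f\in\mathcal{G}\}=C(\bigvee\mathcal{G})\in C(\mathcal{F})$ and $\bigcap\{C(f)\mid f\in\mathcal{G}\}=C(\bigwedge\mathcal{G})\in C(\mathcal{F})$. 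Hence $C(\mathcal{F})$ is an Alexandrov topology, and the identical argument with $S$ in place of $C$ handles $S(\mathcal{F})$.

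For part~(b), assume $\mathcal{F}$ is closed under $\sim$. I would verify $X\in C(\mathcal{F})\iff X^{c}\in S(\mathcal{F})$ directly: if $X=C(f)$ with $f\in\mathcal{F}$, then $\sim f\in\mathcal{F}$ and Lemma~\ref{lem:support_demorgan} gives $S(\sim f)=C(f)^{c}=X^{c}$; conversely, if $X^{c}=S(g)$ with $g\in\mathcal{F}$, then $\sim g\in\mathcal{F}$ and $C(\sim g)=S(g)^{c}=X$. Thus $C(\mathcal{F})$ and $S(\mathcal{F})$ are dual.

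For part~(c), suppose $\mathcal{F}$ is a three-valued {\L}ukasiewicz subalgebra, so in particular it is closed under $\sim$ and $\triangledown$. First, $S(\mathcal{F})\subseteq C(\mathcal{F})$: for $f\in\mathcal{F}$, Lemma~\ref{lem:support_triangle} gives $S(f)=C(\triangledown f)\in C(\mathcal{F})$. Combining this inclusion with the duality of part~(b): if $X\in C(\mathcal{F})$, then $X^{c}\in S(\mathcal{F})\subseteq C(\mathcal{F})$, and applying the duality once more yields $X=(X^{c})^{c}\in S(\mathcal{F})$, so also $C(\mathcal{F})\subseteq S(\mathcal{F})$ and hence $\mathcal{B}:=C(\mathcal{F})=S(\mathcal{F})$. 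By part~(a), $\mathcal{B}$ contains $\emptyset,U$ and is closed under arbitrary unions and intersections, while the duality of part~(b) now reads $X\in\mathcal{B}\iff X^{c}\in\mathcal{B}$; so $\mathcal{B}$ is a field of sets, i.e.\ a Boolean sublattice of $\wp(U)$ (in fact a complete one), with set-theoretic complement as Boolean complement. The only point that calls for a moment's care is this last part: recognizing that $C(\mathcal{F})=S(\mathcal{F})$, which I reduce to $\triangledown$-closure plus part~(b) rather than computing separately with $\vartriangle$, and that closure under $\cup$, $\cap$, and ${}^{c}$ (together with $\emptyset,U\in\mathcal{B}$) already forces $\mathcal{B}$ to be a Boolean lattice; the rest is a routine application of the cited lemmas.
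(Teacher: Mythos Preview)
Your proof is correct. Parts~(a) and~(b) are essentially identical to the paper's argument: both rely on Lemma~\ref{lem:ContinuousMeetsJoins} (equivalently, the isomorphism $\varphi$) for closure under arbitrary unions and intersections, and on Lemma~\ref{lem:support_demorgan} for duality.

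Part~(c) is where you diverge. The paper first observes that a {\L}ukasiewicz subalgebra is closed under $^*$ and $^+$ (via Proposition~\ref{Prop:ImplyLuka}), then uses Lemma~\ref{lem:support_star} to see that $S(f)^c=S(f^*)$ and $C(f)^c=C(f^+)$, so each of $S(\mathcal{F})$ and $C(\mathcal{F})$ is separately complement-closed (hence Boolean), and finally combines $^*$, $^+$ with $\sim$ to exhibit each support as a core and vice versa. You instead use only the primitive {\L}ukasiewicz operations: Lemma~\ref{lem:support_triangle} gives $S(f)=C(\triangledown f)$, so $S(\mathcal{F})\subseteq C(\mathcal{F})$ immediately, and then you bootstrap the reverse inclusion from the duality of~(b) by two applications of complementation. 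This is a bit more economical---it avoids invoking closure under the derived operations $^*$, $^+$---and it makes explicit that the equality $C(\mathcal{F})=S(\mathcal{F})$ already follows from closure under $\sim$ and $\triangledown$ alone, after which the Boolean structure drops out of the self-duality. The paper's route, on the other hand, shows that each of $C(\mathcal{F})$ and $S(\mathcal{F})$ is Boolean \emph{before} establishing their equality, which is a slightly stronger intermediate statement.
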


\begin{proof}
(a) By Proposition~\ref{Prop:Correspondence}, the map $\varphi \colon f \mapsto (C(f), S(f))$ is an order-isomorphism between the
complete lattices  $\mathbf{3}^U$ and $\mathcal{A}(U)$. Because $\mathcal{F}$ is a complete sublattice of $\mathbf{3}^U$ its
$\varphi$-image is a complete sublattice of $\mathcal{A}(U)$. This means that $C(\mathcal{F})$ and $S(\mathcal{F})$ are closed
with respect to arbitrary unions and intersections. Thus, they are Alexandrov topologies.

(b) Suppose $\mathcal{F}$ is closed with respect to $\sim$. Then, by Lemma~\ref{lem:support_demorgan}, for $f \in \mathcal{F}$,
\[ C(f)^c = S({\sim}f) \in S(\mathcal{F}) \quad \mbox{and} \quad S(f)^c = C({\sim}f) \in C(\mathcal{F}). \]
Hence,  $C(\mathcal{F})$ and $S(\mathcal{F})$ are dual topologies.

(c) Since $\mathcal{F}$ is a three-valued {\L}ukasiewicz subalgebra of $\mathbf{3}^U$, it is also closed with respect to $^*$ and $^+$. 
By  Lemma~\ref{lem:support_star},
\[   S(f)^c = S(f^*) \quad \mbox{and} \quad C(f)^c =  C(f^+)\]
for any $f \in \mathcal{F}$. This implies that $S(\mathcal{F})$ and $C(\mathcal{F})$ are closed with respect to set-theoretical complement.
Because they are Alexandrov topologies, they form Boolean lattices. In addition,
\[ S(f) = S(f^*)^c = C({\sim}f^*) \quad \mbox{and}  \quad   C(f) = C(f^+)^c = S({\sim}f^+),\]
which implies that  $C(\mathcal{F}) = S(\mathcal{F})$. \qed
\end{proof}

Let $\mathcal{F}$ be a complete polarity sublattice of $\mathbf{3}^U$. Then, by Lemma~\ref{lem:inducesAlex}, $C(\mathcal{F})$ and
$S(\mathcal{F})$ are dual Alexandrov topologies. Let us define a binary relation $\leq_\mathcal{F}$ on $U$ by
\[
  x \leq_\mathcal{F} y \iff \mbox{$f(x) = 1$ implies $f(y) =1$ for all $f \in \mathcal{F}$}.
\]
Let us also introduce the following notation
\[ [ x ) _\mathcal{F} = \{y \in U \mid x \leq_\mathcal{F} y \}
\qquad \mbox{ and } \qquad
   (x]_\mathcal{F} = \{y \in U \mid x \geq_\mathcal{F} y \},
\]
where $\geq_\mathcal{F}$ is the inverse relation of $\leq_\mathcal{F}$.

\begin{lemma} \label{lem:properties_quasiorder}
Let $\mathcal{F}$ be a complete polarity sublattice of  $\mathbf{3}^U$.
\begin{enumerate}[\rm (a)]
\item The relation $\leq_\mathcal{F}$ is the quasiorder corresponding to the Alexandrov topology $C(\mathcal{F})$ and $[x) _\mathcal{F}$ 
is the smallest neighbourhood of the point $x$ in $C(\mathcal{F})$.
\item The relation $\geq_\mathcal{F}$ is the quasiorder corresponding to the Alexandrov topology $S(\mathcal{F})$ and $(x] _\mathcal{F}$
is the smallest neighbourhood of the point $x$ in $S(\mathcal{F})$.
\item $x \leq_\mathcal{F} y$ if and only if $f(x) = 0$ imply $f(y) = 0$ for all $f \in \mathcal{F}$.  
\end{enumerate}
\end{lemma}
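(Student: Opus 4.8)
The plan is to prove the three parts by systematically unfolding the definitions and appealing to Lemma~\ref{lem:inducesAlex}, which already tells us that $C(\mathcal{F})$ and $S(\mathcal{F})$ are dual Alexandrov topologies whose quasiorders are therefore mutually inverse by Lemma~\ref{lem:inverse_relation}. For part (a), I would first check that $\leq_{\mathcal{F}}$ is a quasiorder (reflexivity and transitivity are immediate from the implicational form of the definition). The main point is to identify $\leq_{\mathcal{F}}$ with the quasiorder $\leq_{C(\mathcal{F})}$ attached to the Alexandrov topology $C(\mathcal{F})$. Recall from Section~\ref{sec:Alex} that $x \leq_{C(\mathcal{F})} y$ means $y \in N(x)$, where $N(x) = \bigcap\{ B \in C(\mathcal{F}) \mid x \in B\}$; since the sets in $C(\mathcal{F})$ are exactly the cores $C(f)$ for $f \in \mathcal{F}$, this says precisely that every $f \in \mathcal{F}$ with $f(x) = 1$ also has $f(y) = 1$, which is the definition of $x \leq_{\mathcal{F}} y$. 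Hence $\leq_{\mathcal{F}} = \leq_{C(\mathcal{F})}$, and consequently the smallest neighbourhood of $x$ in $C(\mathcal{F})$ is $[x)_{\mathcal{F}}$, because $N(x) = [x)$ in the topology $\mathcal{T}_{\leq_{\mathcal{F}}} = C(\mathcal{F})$ (the equality of these two topologies comes from the bijective correspondence between quasiorders and Alexandrov topologies together with $C(\mathcal{F})$ being Alexandrov).

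For part (b), the cleanest route is to combine part (a) with Lemma~\ref{lem:inducesAlex}(b) and Lemma~\ref{lem:inverse_relation}. Since $\mathcal{F}$ is a complete polarity sublattice, $C(\mathcal{F})$ and $S(\mathcal{F})$ are dual Alexandrov topologies; by Lemma~\ref{lem:inverse_relation} their associated quasiorders are mutually inverse. By part (a) the quasiorder of $C(\mathcal{F})$ is $\leq_{\mathcal{F}}$, so the quasiorder of $S(\mathcal{F})$ is its inverse $\geq_{\mathcal{F}}$, and the smallest neighbourhood of $x$ in $S(\mathcal{F})$ is then $[x)$ computed in $\mathcal{T}_{\geq_{\mathcal{F}}} = S(\mathcal{F})$, which is exactly $(x]_{\mathcal{F}}$ by definition of that notation. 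Alternatively one can argue directly: using Lemma~\ref{lem:support_demorgan}, $S(f)^c = C({\sim}f)$, so the members of $S(\mathcal{F})$ are the complements of the members of $C(\mathcal{F})$, and $x \geq_{\mathcal{F}} y$ unfolds to "$y \in S(f)$ implies $x \in S(f)$ for all $f$", which is the membership condition for the smallest neighbourhood in $S(\mathcal{F})$.

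For part (c), I would exploit Lemma~\ref{lem:support_star}, which gives $C(f^+) = C(f)^c$ whenever $\mathcal{F}$ is closed under $^+$ — but here we only assume $\mathcal{F}$ is a polarity sublattice, not closed under $^+$, so instead the right tool is Lemma~\ref{lem:support_demorgan}: $S({\sim}f) = C(f)^c$. Thus $f(x) = 0 \iff ({\sim}f)(x) = 1 \iff x \in C({\sim}f)$, and since $\mathcal{F}$ is closed under $\sim$, as $f$ ranges over $\mathcal{F}$ so does ${\sim}f$. Therefore the condition "$f(x) = 0$ implies $f(y) = 0$ for all $f \in \mathcal{F}$" is equivalent to "$g(y) = 1$ implies $g(x) = 1$ for all $g \in \mathcal{F}$" (taking $g = {\sim}f$ and contraposing), which is exactly $y \leq_{\mathcal{F}} x$ reversed, i.e. $x \leq_{\mathcal{F}} y$ — wait, one must be careful with the direction: reindexing $g = {\sim}f$ turns "$f(x)=0 \Rightarrow f(y)=0$" into "$g(x) = 1 \Rightarrow g(y) = 1$", which is literally the original definition of $x \leq_{\mathcal{F}} y$. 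So (c) is just a restatement of the definition after applying $\sim$.

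I do not expect any serious obstacle here; the only thing requiring a little care is bookkeeping the direction of the implications and inverse relations in parts (b) and (c) — in particular making sure that "smallest neighbourhood in $S(\mathcal{F})$" really corresponds to $(x]_{\mathcal{F}}$ and not to $[x)_{\mathcal{F}}$, which follows once one pins down that the quasiorder of the dual topology is the inverse quasiorder.
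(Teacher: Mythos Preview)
Your proof is correct. Parts (a) and (b) align with the paper: for (a) both of you unfold the definition of $\leq_\mathcal{F}$ in terms of cores and identify it with the quasiorder attached to $C(\mathcal{F})$; for (b) the paper simply says ``proved similarly'' (i.e.\ a direct argument parallel to (a)), while your primary route goes through the duality of $C(\mathcal{F})$ and $S(\mathcal{F})$ via Lemmas~\ref{lem:inducesAlex}(b) and~\ref{lem:inverse_relation}, which is an equally valid and arguably tidier path since it reuses already-established facts. Part (c) is where your approach genuinely diverges: the paper derives (c) \emph{from} (b), translating $x \leq_\mathcal{F} y$ into the support-neighbourhood condition ``$f(y)\geq u \Rightarrow f(x)\geq u$'' and then contraposing, whereas you bypass (b) entirely and use the polarity $\sim$ as a bijection on $\mathcal{F}$ to turn ``$f(x)=0\Rightarrow f(y)=0$'' directly into ``$g(x)=1\Rightarrow g(y)=1$'' for $g={\sim}f$, recovering the original definition. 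Your route is shorter and logically independent of (b). One minor caution: in your ``alternatively one can argue directly'' aside for (b), the implication should read ``$x\in S(f)\Rightarrow y\in S(f)$'' (not the reverse) for $y$ to lie in the least $S(\mathcal{F})$-neighbourhood of $x$ and to match $x \geq_\mathcal{F} y$ --- this is exactly the directional bookkeeping you warned yourself about, and it does not affect your main duality argument.
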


\begin{proof} (a) Suppose that $x \leq_\mathcal{F} y$. By definition this is equivalent to that $x \in C(f)$ implies $y \in C(f)$ for all $f \in \mathcal{F}$.
From this we obtain $y \in \bigcap \{ C(f) \mid f \in \mathcal{F} \ \mbox{and} \ x \in C(f)\}$. 
This means that $y$ belongs to the smallest neighbourhood of $x$ in the Alexandrov topology $C(\mathcal{F})$.
On the other hand, if  $x \nleq_\mathcal{F} y$, then there exists $g \in \mathcal{F}$ such that $g(x) = 1$, but $g(y) \neq 1$.
This then means that $x \in C(g)$ and $y \notin C(g)$. From this we obtain
$y \notin \bigcap \{ C(f) \mid f \in \mathcal{F} \ \mbox{and} \ x \in C(f)\}$. We deduce that
$\leq_\mathcal{F}$ is the quasiorder corresponding to the Alexandrov topology $C(\mathcal{F})$. Obviously, $[ x ) _\mathcal{F}$
is the smallest neighbourhood of the point $x$ in $C(\mathcal{F})$. Claim (b) can be proved similarly.

(c) Assume  $x \leq_\mathcal{F} y$. Since  $\geq_\mathcal{F}$ is the quasiorder of the Alexandrov topology $S(\mathcal{F})$,  $y \geq_\mathcal{F} x$
means that  $x \in \bigcap \{ S(f) \mid f \in \mathcal{F} \ \mbox{and} \ f(y) \geq u \}$. Suppose that $f(x) = 0$. We must have
$f(y) \ngeq u$ which is equivalent $f(y) = 0$. On the other hand, if $x \nleq_\mathcal{F} y$, that is, $y \ngeq_\mathcal{F} x$, then there is
$g \in \mathcal{F}$ such that $g(y) \geq u$ and $g(x) \ngeq u$. The latter means $g(x) = 0$. Therefore, $g(x) = 0$ does not imply $g(y) = 0$. \qed
\end{proof}

\begin{remark}
Note that if $\mathcal{F}$ is a complete sublattice and a three-valued {\L}ukasiewicz subalgebra of $\mathbf{3}^U$, then the relation $\leq_\mathcal{F}$
is an equivalence. Indeed, suppose that $x \leq_\mathcal{F} y$. Then $y$ belongs to the smallest neighbourhood of $x$ in the Alexandrov topology
$C(\mathcal{F})$. Now $C(\mathcal{F}) = S(\mathcal{F})$ by Lemma~\ref{lem:inducesAlex}(c). 
This means that  $y$ belongs to the smallest neighbourhood of $x$ in the Alexandrov topology
$S(\mathcal{F})$, and therefore $x \geq_\mathcal{F} y$. Thus, $\leq_\mathcal{F}$ is symmetric.

It is also easy to see that if $\leq_\mathcal{F}$ is an equivalence and $x \leq_\mathcal{F} y$, then
$f(x) = f(y)$ for all $f \in \mathcal{F}$. Indeed, if $f(x) = 0$, then $f(y) = 0$ by Lemma~\ref{lem:properties_quasiorder}. Similarly,
$f(x) = 1$ implies $f(y) = 1$.  If $f(x) = u$, then $f(y)$ must be $u$, because $f(y) = 0$ or $f(y) = 1$ and $y \leq_\mathcal{F} x$ would
imply $f(x) = 0$ or $f(x) = 1$, a contradiction.
\end{remark}

The following lemma is now clear by (\ref{eq:rough_from_alex}).

\begin{lemma}
Let $\mathcal{F}$ be a complete polarity sublattice of  $\mathbf{3}^U$. If we define the operators $^\DOWN$ and $^\UP$ in terms of  $\leq_\mathcal{F}$,
then $C(\mathcal{F}) = \wp(U)^\DOWN$ \ and  \ $S(\mathcal{F}) = \wp(U)^\UP$.
\end{lemma}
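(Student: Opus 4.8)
The plan is to read off the result directly from Lemma~\ref{lem:properties_quasiorder} and the identities (\ref{eq:rough_from_alex}), so essentially no computation is needed. First I would recall that, by Lemma~\ref{lem:properties_quasiorder}(a), the relation $\leq_\mathcal{F}$ is exactly the quasiorder associated with the Alexandrov topology $C(\mathcal{F})$; in the notation of Section~\ref{sec:Alex} this says $\mathcal{T}_{\leq_\mathcal{F}} = C(\mathcal{F})$. Dually, by Lemma~\ref{lem:properties_quasiorder}(b), the inverse relation $\geq_\mathcal{F}$ is the quasiorder associated with $S(\mathcal{F})$, i.e.\ $\mathcal{T}_{\geq_\mathcal{F}} = S(\mathcal{F})$.

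Next I would apply the equalities (\ref{eq:rough_from_alex}) to the quasiorder $\leq_\mathcal{F}$ itself. Reading those identities with $\leq$ replaced by $\leq_\mathcal{F}$ — and therefore with $^\DOWN$ and $^\UP$ computed from $\leq_\mathcal{F}$ — they state $\mathcal{T}_{\leq_\mathcal{F}} = \wp(U)^\DOWN$ and $\mathcal{T}_{\geq_\mathcal{F}} = \wp(U)^\UP$. Chaining the two groups of equalities gives $C(\mathcal{F}) = \mathcal{T}_{\leq_\mathcal{F}} = \wp(U)^\DOWN$ and $S(\mathcal{F}) = \mathcal{T}_{\geq_\mathcal{F}} = \wp(U)^\UP$, which is precisely the assertion.

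There is no genuine obstacle here; the only thing to be careful about is the bookkeeping of \emph{which} relation defines the operators $^\DOWN$ and $^\UP$. They must be taken with respect to $\leq_\mathcal{F}$ (so that (\ref{eq:rough_from_alex}) may be invoked for $\leq_\mathcal{F}$ and for its inverse $\geq_\mathcal{F}$, and not for some unrelated quasiorder); once this is fixed, the lemma follows immediately, as already indicated in the text.
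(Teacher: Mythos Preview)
Your proposal is correct and matches the paper's approach exactly: the paper simply states that the lemma ``is now clear by (\ref{eq:rough_from_alex}),'' implicitly relying on Lemma~\ref{lem:properties_quasiorder} to identify $\leq_\mathcal{F}$ and $\geq_\mathcal{F}$ with the quasiorders of $C(\mathcal{F})$ and $S(\mathcal{F})$, which is precisely what you spell out.
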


\begin{example}
We have already noted that $\mathcal{A}(U)$ is isomorphic to 
$\mathbf{3}^U$ as a three-valued {\L}ukasiewicz algebra, as a  regular double Stone algebra and as a semi-simple Nelson algebra,
because $\varphi$ preserves all these operations.

Let us consider the three-element set $U = \{a,b,c\}$.
The set  $\mathbf{3}^{\{a,b,c\}}$ can be viewed as a set of 3-valued characteristic vectors of length 3, that is,
\[ \{ (x,y,z) \mid x,y,z \in \{0,u,1\} \} . \]
Obviously, there are 27 such vectors.  Let us agree that the first position corresponds to $a$, the second 
corresponds to $b$, and the third corresponds to $c$.

The operations in $\mathbf{3}^{\{a,b,c\}}$ are unique and are `lifted' pointwise from $\mathbf{3}$. This means that if $(x,y,z) \in \mathbf{3}^{\{a,b,c\}}$,
then 
\[ {\sim} (x,y,z) =  ( {\sim}x, {\sim}y, {\sim}z) \qquad \mbox{and} \qquad   (x,y,z)^* =  (x^*,y^*,z^*) , \]
for instance. 

\bigskip%

Let us consider a collection $\mathcal{RS} \subseteq \wp(U) \times \wp(U)$ such that 
\[ \mathcal{RS} = \{ (\emptyset,\emptyset), (\{a\}, \{a\}), (\emptyset, \{b,c\}), (\{a\}, U), (\{b,c\}, \{b,c\}), (U,U)\} , \]
which is the rough set system of the equivalence $E$ on $U$ having the equivalence classes $\{a\}$ and $\{b,c\}$.

The corresponding 3-valued functions are
\[
\begin{array}{lll}
  f_{(\emptyset,\emptyset)} = (0,0,0), & f_{ (\{a\}, \{a\})} = (1,0,0), &  f_{ (\emptyset, \{b,c\})} = (0,u,u), \\[2mm]
  f_{ (\{a\}, U)} = (1,u,u), & f_{ (\{b,c\}, \{b,c\})} = (0,1,1),  &  f_{ (U,U)} = (1,1,1).
\end{array}
\]
Let us denote the set of these functions by $\mathcal{F}$. Next we construct the Alexandrov topologies 
$C(\mathcal{F})$ and $S(\mathcal{F})$, and the relation $\leq_\mathcal{F}$. We will show that $\mathcal{F}$ forms
a three-valued {\L}ukasiewicz subalgebra of $\mathbf{3}^U$, and therefore  $\leq_\mathcal{F}$ is an equivalence and
$C(\mathcal{F}) = S(\mathcal{F})$ is a Boolean algebra.

It is easy to see that $\mathcal{F}$ is closed with respect to $\sim$ of $\mathbf{3}^U$:
\[
\begin{array}{lll}
 {\sim}(0,0,0) = (1,1,1), & {\sim}(1,0,0) =  (0,1,1),  &  {\sim}(0,u,u) =  (1,u,u),\\[2mm]
 {\sim}(1,u,u) = (0,u,u), & {\sim}(0,1,1) =  (1,0,0), &  {\sim}(1,1,1) =  (0,0,0).
\end{array}
\]
 Similarly,  $\mathcal{F}$ is closed with respect to $^*$:
\[
\begin{array}{lll} 
  (0,0,0)^* = (1,1,1), & (1,0,0)^* =  (0,1,1), & (0,u,u)^* = (1,0,0),\\[2mm]
  (1,u,u)^* = (0,0,0), & (0,1,1)^* =  (1,0,0), & (1,1,1)^* = (0,0,0).
\end{array}
\]
This means that  $\mathcal{F}$ forms a three-valued {\L}ukasiewicz subalgebra of $\mathbf{3}^U$. The possibility operation
$\triangledown$ is defined by
\[
\begin{array}{lll} 
  \triangledown (0,0,0) = (0,0,0), & \triangledown (1,0,0) =  (1,0,0), & \triangledown (0,u,u) = (0,1,1),\\[2mm]
  \triangledown (1,u,u) = (1,1,1), & \triangledown (0,1,1) =  (0,1,1), & \triangledown (1,1,1) = (1,1,1).
\end{array}
\]


Let us consider the set $S(\mathcal{F})$. Now
\[
\begin{array}{lll}
S(0,0,0) = \emptyset, &  S(1,0,0) =  \{a\},   &  S(0,u,u) =  \{b,c\}, \\[2mm]
S(1,u,u) = U,         &  S(0,1,1) =  \{b,c\}, & S(1,1,1) =  U.
\end{array}
\]

This means that 
\[ S(\mathcal{F}) = \{ \emptyset, \{a\}, \{b,c\}, U \} ,\]
and this topology also is equal to $C(\mathcal{F})$.
The topology  $S(\mathcal{F})$ induces an equivalence $\leq_\mathcal{F}$ whose equivalence classes are  $\{a\}$ and $\{b,c\}$.
Obviously, the rough set system defined by 
$\leq_\mathcal{F}$ coincides with $\mathcal{RS}$ above.
\end{example}

The above example shows how for each equivalence $E$ on $U$, we obtain a three-valued {\L}ukasiewicz subalgebra $\mathcal{F}$ of $\mathbf{3}^U$ such that in terms of 
$\mathcal{F}$ we can construct the same equivalence $E$ that we started with. On the other hand, we know from the literature \cite{Comer95} that
for each complete atomic regular double Stone algebra $\mathbb{A}$ there exists a set $U$ and an equivalence $E$ on $U$ such that the rough set system determined
by $E$ is isomorphic to $\mathbb{A}$. As we have noted, regular double Stone algebras correspond to three-valued {\L}ukasiewicz algebras. Note that an ordered set
with a least element 0 is \emph{atomic} if every nonzero element has an atom a below it. 

Let us consider the two-element set $U = \{a,b\}$. Because $\mathbf{3}^U$ is finite, it is atomic. If $\mathcal{F}$ is a three-valued {\L}ukasiewicz subalgebra of
$\mathbf{3}^U$, then $\mathcal{F}$ is isomorphic to the rough set algebra determined by an equivalence $E$ on \emph{some} set, not necessarily $U$.
For instance, we can see that $\mathbf{3}^{\{a,b\}}$ has 6 different  three-valued {\L}ukasiewicz subalgebras: $\mathbf{2}$, $\mathbf{3}$,
$\mathbf{2} \times \mathbf{2}$, $\mathbf{2} \times \mathbf{3}$, $\mathbf{3} \times \mathbf{2}$,
and $\mathbf{3} \times \mathbf{3}$, but in $U$ it is possible to define only 2 equivalences: the universal relation and the diagonal relation. 
Therefore, not all complete three-valued {\L}ukasiewicz subalgebras $\mathcal{F}$ of $\mathbf{3}^{\{a,b\}}$ are such that $\mathcal{A}(\mathcal{F})$
is equal to a rough set system defined by an equivalence on $U$.

We can ask the following question:

\begin{question} \label{q:equivalence}
Which three-valued {\L}ukasiewicz subalgebras $\mathcal{F}$ of $\mathbf{3}^U$ are such that there is an equivalence $E$ on $U$ whose rough set system is equal to
$\mathcal{A}(\mathcal{F})$?
\end{question}

In \cite{JarRad11} we proved that if $\mathbb{A}$ is a Nelson algebra defined on an algebraic lattice, then there exists a set $U$ and a quasiorder $\leq$ on $U$
such the rough set Nelson algebra defined by $\leq$ is isomorphic to $\mathbb{A}$. Recall that an \emph{algebraic lattice} is a complete lattice such that every
element is a join of compact elements. A similar question can be also addressed for Nelson algebras:

\begin{question} \label{q:quasiorder}
Which Nelson subalgebras $\mathcal{F}$ of $\mathbf{3}^U$ are such that there is a quasiorder $\leq$ on $U$ whose rough set system equals  
$\mathcal{A}(\mathcal{F})$?
\end{question}

\section{Rough sets defined in terms of three-valued functions} \label{sec:main}

Next our aim is to answer Questions \ref{q:equivalence} and \ref{q:quasiorder}. Let $\mathcal{F} \subseteq \mathbf{3}^U$ and $x \in U$.
We define two functions $U \to \mathbf{3}$ by
\[
f^x = \bigwedge \{ f \in \mathcal{F} \mid f(x) = 1 \} \quad \mbox{ and } \quad f_x = \bigwedge \{ f \in \mathcal{F} \mid f(x) \geq u\}.
\]
In addition, we define an equivalence $\Theta$ on $\mathcal{F}$ as the kernel of $C$, that is,
\[ f  \Theta  g \iff C(f) = C(g). \]

\begin{lemma} \label{lem:neighbourhoods}
Let $\mathcal{F}$ be a complete polarity sublattice of $\mathbf{3}^U$. For all $x,y \in U$,
\begin{enumerate}[\rm (a)]
\item $f_x \leq f^x$;
\item $[ x ) _\mathcal{F} = C(f^x)$ and $ (x] _\mathcal{F} = S(f_x)$;
\item $x \leq_\mathcal{F} y \iff f_x \leq f_y \iff f^x \geq f^y$;
\item $f^x = \bigwedge \{ h \in \mathcal{F} \mid h \Theta  f_x \}$.
\end{enumerate}
\end{lemma}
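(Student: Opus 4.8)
The overall strategy is to transport the statements through the order-isomorphism $\varphi$ of Proposition~\ref{Prop:Correspondence}, using that $C$ and $S$ turn arbitrary meets into intersections (Lemma~\ref{lem:ContinuousMeetsJoins}) and the description of $\le_{\mathcal F}$, $[x)_{\mathcal F}$, $(x]_{\mathcal F}$ supplied by Lemma~\ref{lem:properties_quasiorder}. Note first that the families $\{f\in\mathcal F\mid f(x)=1\}$ and $\{f\in\mathcal F\mid f(x)\ge u\}$ are nonempty (both contain $\top$), so $f^x,f_x\in\mathcal F$ since $\mathcal F$ is closed under arbitrary meets. Part (a) is then immediate: $f(x)=1$ forces $f(x)\ge u$, so the index family of $f^x$ is contained in that of $f_x$, and a meet over a smaller family is larger. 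For part (b), Lemma~\ref{lem:ContinuousMeetsJoins} gives $C(f^x)=\bigcap\{C(f)\mid f\in\mathcal F,\ x\in C(f)\}$, which is exactly the least neighbourhood of $x$ in the Alexandrov topology $C(\mathcal F)$, hence $[x)_{\mathcal F}$ by Lemma~\ref{lem:properties_quasiorder}(a); dually $S(f_x)=\bigcap\{S(f)\mid f\in\mathcal F,\ x\in S(f)\}$ is the least neighbourhood of $x$ in $S(\mathcal F)$, hence $(x]_{\mathcal F}$ by Lemma~\ref{lem:properties_quasiorder}(b).

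For part (c) I would prove the chain $x\le_{\mathcal F}y\iff f^x\ge f^y\iff f_x\le f_y$. If $x\le_{\mathcal F}y$ then by (b) $y\in[x)_{\mathcal F}=C(f^x)$, so $f^x(y)=1$, so $f^x$ belongs to the family defining $f^y$ and $f^x\ge f^y$; conversely $f^x\ge f^y$ gives $C(f^y)\subseteq C(f^x)$, and since $y\in[y)_{\mathcal F}=C(f^y)\subseteq C(f^x)=[x)_{\mathcal F}$ we get $x\le_{\mathcal F}y$. For the remaining equivalence, Lemma~\ref{lem:properties_quasiorder}(c) rephrases $x\le_{\mathcal F}y$ as $\{f\in\mathcal F\mid f(y)\ge u\}\subseteq\{f\in\mathcal F\mid f(x)\ge u\}$; taking meets (a meet over the smaller, $y$-family being larger) yields $f_x\le f_y$, and conversely $f_x\le f_y$ gives $(x]_{\mathcal F}=S(f_x)\subseteq S(f_y)=(y]_{\mathcal F}$, whence $x\in(y]_{\mathcal F}$ (reflexivity of $\le_{\mathcal F}$) and so $x\le_{\mathcal F}y$.

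For part (d), write $D=\{h\in\mathcal F\mid h\,\Theta\,f_x\}=\{h\in\mathcal F\mid C(h)=C(f_x)\}$. The plan is two steps: (i) show $f^x\in D$, i.e. $C(f^x)=C(f_x)$; and (ii) show $f^x$ is a lower bound of $D$, so that $f^x=\bigwedge D$. Step (ii) is easy once (i) is in hand: any $h\in\mathcal F$ with $C(h)=C(f_x)=C(f^x)=[x)_{\mathcal F}$ has $x\in C(h)$ (as $\le_{\mathcal F}$ is reflexive), hence $h(x)=1$, hence $h\ge\bigwedge\{f\in\mathcal F\mid f(x)=1\}=f^x$; together with $f^x\in D$ this gives that $f^x$ is the least element of $D$, so $f^x=\bigwedge D$.

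The main obstacle is step (i). Since (a) already yields $C(f_x)\subseteq C(f^x)=[x)_{\mathcal F}$, it reduces to the inclusion $[x)_{\mathcal F}\subseteq C(f_x)$, i.e. to showing that whenever $g\in\mathcal F$ satisfies $g(x)\ne 1$ and $x\le_{\mathcal F}y$, then $g(y)=0$. I would try to get this from the polarity of $\mathcal F$: replacing $g$ by $\sim g$ (Lemma~\ref{lem:support_demorgan}), the case $g(x)=0$ is immediate because then $(\sim g)(x)=1$ and $y\in[x)_{\mathcal F}$, leaving the case $g(x)=u$ as the genuine pinch point. This is where I expect the essential use of the De~Morgan/polarity structure of $\mathcal F$ to be needed, and it is the step I would scrutinise most carefully (for the variant of (d) with $f^x$ in place of $f_x$, step (i) is trivial and the lemma follows at once, which is a useful sanity check on the precise form of the claim). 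Parts (a)--(c), by contrast, are routine manipulations of $\varphi$; only (d) carries real content.
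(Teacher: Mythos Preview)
Your arguments for (a)--(c) are correct and essentially identical to the paper's. For (d), however, you are attempting to prove a claim that is false as printed: the occurrence of $f_x$ inside the $\Theta$-class is a misprint for $f^x$. A counterexample to the stated form is $U=\{a,b\}$ with $\mathcal F=\{(0,0),(u,u),(1,1)\}$ (a complete polarity sublattice of $\mathbf 3^U$; in fact the rough-set algebra of the full equivalence on $U$). Here $f_a=(u,u)$ has $C(f_a)=\emptyset$, so $\{h\in\mathcal F\mid h\,\Theta\,f_a\}=\{(0,0),(u,u)\}$ and its meet is $(0,0)\ne(1,1)=f^a$. Thus your ``pinch point'' in step~(i), the case $g(x)=u$, genuinely cannot be closed, and no use of the polarity will rescue it.

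The paper's own proof of (d), and its sole later application in the proof of Theorem~\ref{Thm:main}, both work with $\{h\in\mathcal F\mid h\,\Theta\,f^x\}$, which is precisely the ``variant'' you flagged as immediate: $f^x$ trivially lies in this set, giving $\bigwedge\{h\mid h\,\Theta\,f^x\}\le f^x$, while any $h$ with $C(h)=C(f^x)=[x)_{\mathcal F}$ satisfies $x\in C(h)$, hence $h(x)=1$, hence $h\ge f^x$. (The paper's proof even has a further slip, writing $f_x=\bigwedge\{h\mid h(x)=1\}$ where $f^x$ is meant.) Your instinct to check this variant as a sanity test was exactly right; the obstruction you detected is a typographical error in the statement, not a defect in your reasoning.
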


\begin{proof} (a) Since $\{f \in \mathcal{F} \mid f(x) = 1\} \subseteq \{f \in \mathcal{F} \mid f(x) \geq u\}$, we have
\[ f^x = \bigwedge \{f \in \mathcal{F} \mid f(x) = 1\} \geq \bigwedge \{f \in \mathcal{F} \mid f(x) \geq u\} = f_x. \]

(b) Using Lemmas \ref{lem:ContinuousMeetsJoins} and \ref{lem:properties_quasiorder},
\begin{eqnarray*}
[x)_\mathcal{F} &=& \bigcap \{ C(f) \mid f \in \mathcal{F} \ \mbox{ and } \ x \in C(f) \} = C \big ( \bigwedge \{ f \in \mathcal{F} \mid x \in C(f)\} \big) \\
& = & C \big ( \bigwedge \{ f \in \mathcal{F} \mid f(x) = 1\} \big ) = C(f^x) 
\end{eqnarray*}
and
\[
(x]_\mathcal{F} = \bigcap \{ S(f) \mid f \in \mathcal{F} \ \mbox{ and } \ x \in S(f) \} = S \big ( \bigwedge \{ f \in \mathcal{F} \mid x \in S(f)\} \big ) \\
= S \big ( \bigwedge \{ f \in \mathcal{F} \mid f(x) \geq u\} \big ) = S(f_x). 
\]

(c) If $x \leq_\mathcal{F} y$, then $x \in (y]_\mathcal{F} = S(f_y)$ and $y \in [x)_\mathcal{F} = C(f^x)$. Firstly, $x \in S(f_y)$ means that $f_y(x) \geq u$
and $f_y \in \{ f \in \mathcal{F} \mid f(x) \geq u\}$ gives $f_x = \bigwedge  \{ f \in \mathcal{F} \mid f(x) \geq u\} \leq f_y$. Secondly, by
$y \in C(f^x)$ we have $f^x(y) = 1$ and $f^x \in  \{ f \in \mathcal{F} \mid f(y) = 1 \}$. From this we obtain
$f^x \geq \bigwedge \{ f \in \mathcal{F} \mid f(y) = 1 \} = f^y$.

On the other hand, by Proposition~\ref{Prop:Correspondence}, $f_x \leq f_y$ implies $x \in S(f_x) \subseteq S(f_y) = (y]_\mathcal{F}$ and hence
$x \leq_\mathcal{F} y$. Similarly,  $f^x \geq f^y$ implies $y \in C(f^y) \subseteq C(f^x) = [x)_\mathcal{F}$ and $x \leq_\mathcal{F} y$.

(d) Because $f^x \in \{ h \in \mathcal{F} \mid h \Theta f^x\}$, we have
\[ \bigwedge \{ h \in \mathcal{F} \mid h \Theta f^x\} \leq f^x. \]
Since $x \in [x)_\mathcal{F} = C(f^x)$, we have that $h \Theta f^x$ implies $x \in C(h)$, whence $h(x) = 1$. Therefore,
\[
\{ h \in \mathcal{F} \mid h \Theta f^x \} \subseteq \{ h \in \mathcal{F} \mid h(x) = 1\}.\] 
This yields
\[ f_x = \bigwedge \{ h \in \mathcal{F} \mid h(x) = 1\} \leq \bigwedge \{ h \in \mathcal{F} \mid h \Theta f^x \}, \]
completing the proof. \qed
\end{proof}

The following lemma describes the rough approximations in terms of cores and supports of maps.

\begin{lemma} \label{lem:approximations_new}
Let $\mathcal{F}$ be a complete polarity sublattice of  $\mathbf{3}^U$. If we define the operators $^\DOWN$ and $^\UP$ in terms of  $\leq_\mathcal{F}$,
then for any $X \subseteq U$,
\begin{enumerate}[\rm (a)]
\item $X^\UP = S(\bigwedge \{ f \in \mathcal{F} \mid X \subseteq S(f) \}$;
\item $X^\DOWN = C(\bigvee \{ f \in \mathcal{F} \mid C(f) \subseteq X \}$.
\end{enumerate}
\end{lemma}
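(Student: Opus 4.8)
The plan is to identify $X^\UP$ and $X^\DOWN$ as extremal members of the Alexandrov topologies $S(\mathcal{F})$ and $C(\mathcal{F})$ attached to the quasiorder $\leq_\mathcal{F}$, and then convert the resulting intersection of supports and union of cores into the support of a meet and the core of a join by means of Lemma~\ref{lem:ContinuousMeetsJoins}.

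First I would invoke the preceding (unlabelled) lemma: with $^\DOWN$ and $^\UP$ taken with respect to $\leq_\mathcal{F}$, we have $C(\mathcal{F}) = \wp(U)^\DOWN = \mathcal{T}_{\leq_\mathcal{F}}$ and $S(\mathcal{F}) = \wp(U)^\UP = \mathcal{T}_{\geq_\mathcal{F}}$, using (\ref{eq:rough_from_alex}) and Lemma~\ref{lem:properties_quasiorder}. Next I would record the order-theoretic content. Since $\leq_\mathcal{F}$ is reflexive, $X \subseteq X^\UP$; and since $X^\UP = \{\, x \in U \mid x \leq_\mathcal{F} y \mbox{ for some } y \in X \,\}$ is the $\leq_\mathcal{F}$-down-set generated by $X$, which is precisely the least open set of $\mathcal{T}_{\geq_\mathcal{F}}$ containing $X$, it follows that $X^\UP$ is the smallest member of $S(\mathcal{F})$ containing $X$. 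Dually, $X^\DOWN = \{\, x \in U \mid [x)_\mathcal{F} \subseteq X \,\}$ is the interior of $X$ in $\mathcal{T}_{\leq_\mathcal{F}}$, hence the largest member of $C(\mathcal{F})$ contained in $X$. (If one prefers to avoid the topological phrasing, these two facts follow directly from Lemma~\ref{lem:neighbourhoods}(b), since $x \in X^\UP$ iff $x \in (y]_\mathcal{F} = S(f_y)$ for some $y \in X$, and symmetrically for $X^\DOWN$.)

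It then remains to rewrite these extremal sets. For (a), $X^\UP = \bigcap \{\, Y \in S(\mathcal{F}) \mid X \subseteq Y \,\} = \bigcap \{\, S(f) \mid f \in \mathcal{F},\ X \subseteq S(f) \,\}$; this family is nonempty because $\top \in \mathcal{F}$ and $S(\top) = U \supseteq X$, and since $\mathcal{F}$ is a complete sublattice of $\mathbf{3}^U$, the meet of $\mathcal{G} := \{\, f \in \mathcal{F} \mid X \subseteq S(f) \,\}$ computed in $\mathbf{3}^U$ lies in $\mathcal{F}$. Applying Lemma~\ref{lem:ContinuousMeetsJoins}(ii) to $\mathcal{G}$ turns the intersection into $S(\bigwedge \mathcal{G})$, which is exactly (a). For (b), $X^\DOWN = \bigcup \{\, Y \in C(\mathcal{F}) \mid Y \subseteq X \,\} = \bigcup \{\, C(f) \mid f \in \mathcal{F},\ C(f) \subseteq X \,\}$, the family being nonempty since $\bot \in \mathcal{F}$ and $C(\bot) = \emptyset \subseteq X$; Lemma~\ref{lem:ContinuousMeetsJoins}(i) applied to $\mathcal{G}' := \{\, f \in \mathcal{F} \mid C(f) \subseteq X \,\}$ turns the union into $C(\bigvee \mathcal{G}')$, which is (b).

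I expect the only genuinely delicate step to be the identification in the second paragraph: one must keep straight that it is the \emph{inverse} quasiorder $\geq_\mathcal{F}$ whose Alexandrov topology is $S(\mathcal{F})$, and that the $\leq_\mathcal{F}$-down-set $X^\UP$ is at once an open set of $\mathcal{T}_{\geq_\mathcal{F}}$ and the least one above $X$. Everything afterwards is routine bookkeeping with Lemma~\ref{lem:ContinuousMeetsJoins} and the presence of $\top$ and $\bot$ in $\mathcal{F}$.
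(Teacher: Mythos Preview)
Your proposal is correct and follows essentially the same approach as the paper: identify $X^\UP$ (resp.\ $X^\DOWN$) as the least (resp.\ greatest) member of the Alexandrov topology $S(\mathcal{F})$ (resp.\ $C(\mathcal{F})$) containing (resp.\ contained in) $X$, then apply Lemma~\ref{lem:ContinuousMeetsJoins} to convert the intersection of supports and union of cores into $S(\bigwedge\mathcal{G})$ and $C(\bigvee\mathcal{G}')$. Your write-up is somewhat more explicit than the paper's---in particular about the nonemptiness of the index families via $\top,\bot\in\mathcal{F}$ and about which quasiorder governs $S(\mathcal{F})$---but the argument is the same.
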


\begin{proof} (a) The set $X^\UP \in S(\mathcal{F})$ is the smallest set in $S(\mathcal{F})$ containing $X$. On the other hand,
\[ \bigcap \{ S(f) \mid f \in \mathcal{F} \ \mbox{and} \ X \subseteq S(f) \} \]
is the smallest set in $S(\mathcal{F})$ containing $X$. We have that
\[ X^\UP = \bigcap \{ S(f) \mid f \in \mathcal{F} \ \mbox{and} \ X \subseteq S(f) \}
= S \big (\bigwedge \{ f \in \mathcal{F} \mid X \subseteq S(f) \} \big ). \]

(b) Similarly, $X^\DOWN \in C(\mathcal{F})$ is the greatest element of $C(\mathcal{F})$ included in $X$. Hence,
\[ X^\DOWN = \bigcup \{ C(f) \mid f \in \mathcal{F} \ \mbox{and} \ C(f) \subseteq X  \} =  C(\bigvee \{ f \in \mathcal{F} \mid C(f) \subseteq X \}.\] \qed
\end{proof}

Let $\mathcal{F}$ be a complete polarity sublattice of $\mathbf{3}^U$ and $x \in U$. We say that an element $x \in U$
is an \emph{$\mathcal{F}$-singleton} if $[x)_\mathcal{F} = \{x\}$. The following lemma gives a characterisation of
$\mathcal{F}$-singletons.

\begin{lemma} \label{lem:singletons}
Let $\mathcal{F}$ be a complete polarity sublattice of $\mathbf{3}^U$. An element $x \in U$ is
an $\mathcal{F}$-singleton if and only if there is a map $f \in \mathcal{F}$ with $C(f) = \{x\}$.
\end{lemma}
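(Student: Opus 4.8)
The plan is to read the statement directly off Lemma~\ref{lem:neighbourhoods}(b), which identifies $[x)_\mathcal{F}$ with $C(f^x)$, where $f^x = \bigwedge\{f \in \mathcal{F} \mid f(x)=1\}$. The key auxiliary observation is that $f^x$ itself belongs to $\mathcal{F}$: the index family $\{f \in \mathcal{F} \mid f(x)=1\}$ is nonempty (it contains $\top$, since $\mathcal{F}$ is a complete sublattice and hence contains the top of $\mathbf{3}^U$), and $\mathcal{F}$ being a \emph{complete} sublattice is closed under arbitrary meets computed in $\mathbf{3}^U$.

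For the ``if'' direction I would start from a map $f \in \mathcal{F}$ with $C(f) = \{x\}$. Then $f(x) = 1$, so $f$ occurs in the family defining $f^x$, whence $f^x \leq f$. By the order-isomorphism $\varphi$ of Proposition~\ref{Prop:Correspondence} (equivalently, by monotonicity of the core), $C(f^x) \subseteq C(f) = \{x\}$. On the other hand $\leq_\mathcal{F}$ is a quasiorder by Lemma~\ref{lem:properties_quasiorder}(a), hence reflexive, so $x \in [x)_\mathcal{F} = C(f^x)$. Combining the two inclusions gives $C(f^x) = \{x\}$, i.e.\ $[x)_\mathcal{F} = \{x\}$, so $x$ is an $\mathcal{F}$-singleton.

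For the ``only if'' direction, assume $x$ is an $\mathcal{F}$-singleton, that is $[x)_\mathcal{F} = \{x\}$. Then Lemma~\ref{lem:neighbourhoods}(b) yields $C(f^x) = \{x\}$, and since $f^x \in \mathcal{F}$ as noted above, the map $f = f^x$ witnesses the claim.

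I do not expect any genuine obstacle: the proposition is essentially a restatement of Lemma~\ref{lem:neighbourhoods}(b). The only two points that deserve an explicit word are that $f^x$ really lies in $\mathcal{F}$ (this uses completeness of the sublattice, not just sublattice-ness) and that reflexivity of $\leq_\mathcal{F}$ is what places $x$ into $C(f^x)$ in the first direction, ruling out the possibility $C(f^x) = \emptyset$.
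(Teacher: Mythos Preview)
Your proof is correct and follows essentially the same route as the paper: both directions go through the identity $[x)_\mathcal{F} = C(f^x)$ from Lemma~\ref{lem:neighbourhoods}(b), using $f^x \leq f$ for the ``if'' direction and $f^x \in \mathcal{F}$ as witness for the ``only if'' direction. Your version is slightly more explicit in justifying $f^x \in \mathcal{F}$ and $x \in C(f^x)$, which the paper leaves implicit.
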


\begin{proof} By Lemma~\ref{lem:neighbourhoods}, $C(f^x) = [x)_\mathcal{F}$. If $x$ is a $\mathcal{F}$-singleton, then $C(f^x) = \{x\}$.
Conversely, suppose that there is a map $f \in \mathcal{F}$ such that $C(f) = \{x\}$. Because $f(x) = 1$, we have $f^x \leq f$. This implies
\[ \{x\} \subseteq C(f^x) \subseteq C(f) = \{x\}.\]
Thus, $C(f^x) = \{x\}$ and hence $x$ is an $\mathcal{F}$-singleton. \qed
\end{proof}

An element $x$ of a complete lattice $L$ is \emph{completely join-irreducible} if $x = \bigvee S$ implies $x \in S$.
Let us denote by $\mathcal{J}(L)$ the set of completely join-irreducible elements of $L$. A lattice $L$ is \emph{spatial}
if each of its elements is a join of completely join-irreducible elements. 

\begin{proposition} \label{prop:spatial}
Let $\mathcal{F}$ be a complete polarity sublattice of $\mathbf{3}^U$ and $x \in U$. Then $\mathcal{F}$ is spatial
and
\[ \mathcal{J(F)} = \{ f_x \mid x \in U\} \cup \{ f^x \mid x \in U\} .\]
\end{proposition}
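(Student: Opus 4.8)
The plan is to show two things: first, that each $f_x$ and each $f^x$ is completely join-irreducible in $\mathcal{F}$; and second, that every element of $\mathcal{F}$ is a join of such maps, which simultaneously gives spatiality and the reverse inclusion $\mathcal{J}(\mathcal{F}) \subseteq \{f_x \mid x \in U\} \cup \{f^x \mid x \in U\}$.

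First I would establish the spatiality (the ``$\supseteq$'' part of representing every element as a join of the listed maps). Let $g \in \mathcal{F}$. The natural candidate decomposition is
\[
g = \bigvee \big( \{ f_x \mid x \in S(g) \setminus C(g)\} \cup \{ f^x \mid x \in C(g)\} \big).
\]
To verify this, note that by Lemma~\ref{lem:ContinuousMeetsJoins} it suffices to check equality of cores and supports, i.e. to compare $(C(g),S(g))$ with the union of the pairs $(C(f_x),S(f_x)) = ((x]_\mathcal{F}, (x]_\mathcal{F})$ wait — more carefully, using Lemma~\ref{lem:neighbourhoods}(b), $S(f_x) = (x]_\mathcal{F}$ and $C(f^x) = [x)_\mathcal{F}$, and one also needs $C(f_x)$ and $S(f^x)$; since $f_x \leq f^x$ and both are the meets of down-sets/up-sets of $\mathcal{F}$, one shows $C(f_x) \subseteq C(f^x) = [x)_\mathcal{F}$ and $S(f^x) \subseteq S(f_x) = (x]_\mathcal{F}$. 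Then for $x \in C(g)$ one has $g \geq f^x$ (since $f^x$ is the meet of all members of $\mathcal{F}$ taking value $1$ at $x$, and $g$ is one of them), and for $x \in S(g) \setminus C(g)$ one has $g \geq f_x$; hence $g$ is an upper bound. Conversely any $y \in C(g)$ lies in $[y)_\mathcal{F} = C(f^y)$, so $C(g)$ is covered by the cores on the right, and any $y \in S(g)$ lies in $(y]_\mathcal{F} = S(f_y) \subseteq S(\text{the join})$ if $y \notin C(g)$, while if $y \in C(g)$ then $y \in C(f^y) \subseteq S(f^y)$; this gives the reverse containments of cores and supports, so equality holds.

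Next I would prove each $f^x$ is completely join-irreducible. Suppose $f^x = \bigvee \mathcal{H}$ with $\mathcal{H} \subseteq \mathcal{F}$. Then $f^x(x) = 1$, so by Lemma~\ref{lem:ContinuousMeetsJoins}(i), $x \in C(f^x) = \bigcup\{C(h) \mid h \in \mathcal{H}\}$, hence $h(x) = 1$ for some $h \in \mathcal{H}$. For that $h$ we have $h \geq f^x$ by definition of $f^x$ as a meet over $\{f \in \mathcal{F} \mid f(x)=1\}$; but also $h \leq \bigvee \mathcal{H} = f^x$, so $h = f^x \in \mathcal{H}$. The argument for $f_x$ is the same with ``$\geq u$'' in place of ``$=1$'' and supports in place of cores: $f_x(x) \geq u$ forces $x \in S(f_x) = \bigcup\{S(h)\mid h \in \mathcal{H}\}$, so some $h \in \mathcal{H}$ has $h(x) \geq u$, whence $h \geq f_x$ and $h \leq f_x$, giving $h = f_x \in \mathcal{H}$.

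Finally, for the inclusion $\mathcal{J}(\mathcal{F}) \subseteq \{f_x\} \cup \{f^x\}$: if $g \in \mathcal{J}(\mathcal{F})$, apply the decomposition from the first step; since $g$ is completely join-irreducible and equals a join of maps of the form $f_x$ or $f^x$, it must itself be one of them. (One should note the join on the right is nonempty precisely because $g \neq \bot$, which holds since $\bot$ is not completely join-irreducible — $\bot = \bigvee \emptyset$.) The main obstacle I anticipate is the bookkeeping in the first step: verifying carefully that $C(f_x) \subseteq [x)_\mathcal{F}$ and $S(f^x) \subseteq (x]_\mathcal{F}$ and that the proposed decomposition really reproduces $C(g)$ and $S(g)$ exactly, using Lemmas~\ref{lem:ContinuousMeetsJoins}, \ref{lem:properties_quasiorder} and \ref{lem:neighbourhoods}; everything else is a short two-line irreducibility argument.
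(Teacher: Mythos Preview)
Your approach is correct and slightly more elementary than the paper's. The paper obtains spatiality abstractly: it observes that $\mathcal{A}(\mathcal{F})$ is a complete sublattice of $\wp(U)\times\wp(U)$, hence algebraic, and also completely distributive, and invokes the general fact that algebraic completely distributive lattices are spatial; only afterwards does it verify that every $f$ is the join of $\{f_x\mid f_x\le f\}\cup\{f^x\mid f^x\le f\}$ by a case analysis on values at a point where a putative upper bound fails to dominate $f$. You instead exhibit the decomposition directly, indexed by $C(g)$ and $S(g)\setminus C(g)$, and read off spatiality and the description of $\mathcal{J}(\mathcal{F})$ simultaneously. Your irreducibility arguments for $f^x$ and $f_x$ are identical to the paper's.

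One cleanup: the aside ``$S(f^x)\subseteq S(f_x)=(x]_\mathcal{F}$'' is backwards (from $f_x\le f^x$ you get $S(f_x)\subseteq S(f^x)$, not the reverse), but you never actually need it. Once you have established $f_x\le g$ for $x\in S(g)\setminus C(g)$ and $f^x\le g$ for $x\in C(g)$, the inclusions of cores and supports into $C(g)$ and $S(g)$ are automatic; and the reverse inclusions follow exactly as you wrote, from $y\in C(f^y)$ when $y\in C(g)$ and $y\in S(f_y)$ when $y\in S(g)\setminus C(g)$. So drop that stray inequality and the argument goes through cleanly.
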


\begin{proof}
The powerset $\wp(U)$ forms an algebraic lattice in which finite subsets of $U$ are the compact elements.
A product of algebraic lattices is algebraic (see \cite[Proposition~\mbox{I-4.12}]{gierz2003}),
which implies that $\wp(U) \times \wp(U)$ is algebraic. A complete sublattice of an algebraic lattice is algebraic \cite[Exercise~{7.7}]{DaPr02}.
Because $\mathcal{A(F)}$ is a complete sublattice of $\wp(U) \times \wp(U)$, $\mathcal{A(F)}$ is algebraic. We have already noted in Section~\ref{Sec:Three-valued}
that $\mathcal{A}(U)$ is completely distributive. It is known (see e.g.\@ \cite{JarRad11} and the references therein) that every algebraic and completely distributive
lattice is spatial. Thus, $\mathcal{A(F)}$ is spatial and because $\mathcal{F}$ is isomorphic to $\mathcal{A(F)}$, also $\mathcal{F}$ is spatial.

Next we need to find the set of completely join-irreducible elements of $\mathcal{F}$. First we show that each $f^x$
is join-irreducible. Suppose that $f^x = \bigvee \mathcal{G}$ for some $\mathcal{G} \subseteq \mathcal{F}$. Because
$f^x = \bigwedge \{ f \in \mathcal{F} \mid f(x) = 1\}$, $f^x(x) = \bigwedge \{ f(x) \in \mathcal{F} \mid f(x) = 1\} = 1$.
Since $(\bigvee \mathcal{G})(x) = 1$ and $\mathbf{3}$ is a chain, we have that $g(x) = 1$ for some $g \in \mathcal{G}$.
We obtain $g \in \{ f \in \mathcal{F} \mid f(x) = 1\}$ and $f^x = \bigwedge \{ f \mid \mathcal{F} \mid f(x) = 1\} \leq g$.
On the other hand $f^x = \bigvee \mathcal{G}$ gives that $f^x \geq g$. Hence, $f^x = g \in \mathcal{G}$ and $f^x$ is
completely join-irreducible. In an analogous way, we may show that also $f_x$ is completely irreducible.

It is clear that any $f \in \mathcal{F}$ is an upper bound of
\[ \mathcal{H} = \{ f_x \mid f_x \leq f\} \cup \{f^x \mid f^x \leq f\} .\]
Let $g$ be an upper bound of $\mathcal{H}$. We prove that $f \leq g$. For this, we assume that $f \nleq g$. This means
that there is an element $a \in U$ such that $f(a) \nleq g(a)$. Because $\mathbf{3}$ is a chain, we have that
$f(a) > g(a)$. We have now three possibilities.

(i) If $f(a) = 1$ and $g(a) = u$, then $f^a \leq f$, but $f^a(a) = 1$ and $g(a) = u$. Then $g$ is not an upper bound of $\mathcal{H}$,
a contradiction. Case (ii), when $f(a) = 1$ and $g(a) = 0$ is similar.

(iii) If $f(a) = u$ and $g(a) = 0$, then $f_a \leq f$, $f_a(a) \geq u$, and $g(a) = 0$. Thus, $g$ is not an upper bound
of $\mathcal{H}$, a contradiction. Since each case (i)--(iii) leads to a contradiction, we have that $f \leq g$ and $f$ is
the least upper bound of $\mathcal{H}$. We have that
\[ f = \bigvee \{ f_x \mid f_x \leq f\} \vee \bigvee \{f^x \mid f^x \leq f\} .\]
From this it directly follows also that
\[ \mathcal{J(F)} = \{ f_x \mid f_x \leq f\} \cup \{f^x \mid f^x \leq f\} . \] \qed
\end{proof}

Let us now introduce the following three conditions for a complete polarity sublattice $\mathcal{F}$ of $\mathbf{3}^U$. 
\begin{enumerate}[({C}1)]
\item If $x$ is an $\mathcal{F}$-singleton, then $x \in S(f)$ implies $x \in C(f)$ for all $f \in \mathcal{F}$.
\item For any $x$, we have $C(f_x) \subseteq \{x\}$.
\item For any $f,g \in \mathcal{F}$, $C(f) \subseteq S(g)$ implies $S ( \bigwedge \{ h \in \mathcal{F} \mid h \Theta f \}) \subseteq S(g)$.
\end{enumerate}

Let $\mathcal{F}$ be a complete polarity sublattice $\mathbf{3}^U$. If $\mathcal{A(F)} = \mathcal{RS}$ for some quasiorder $\leq$ on $U$,
then for each rough set $(X^\DOWN,X^\UP)$, there is a map $f \in \mathcal{F}$ such that $(C(f),S(f))$ equals $(X^\DOWN,X^\UP)$. Moreover,
for any $f \in \mathcal{F}$, the pair $(C(f),S(f))$ is in $\mathcal{RS}$. We also have that the Alexandrov topologies coincide, meaning that
$C(\mathcal{F}) = \wp(U)^\DOWN$ and $S(\mathcal{F}) = \wp(U)^\UP$. Because there is a one-to-one correspondence between Alexandrov topologies,
and $\leq$ is the quasiorder corresponding to the Alexandrov topology $\wp(U)^\DOWN$ and $\leq_\mathcal{F}$ is the quasiorder of $C(\mathcal{F})$,
we have that $\leq$ and $\leq_\mathcal{F}$ are equal. This means that rough set pairs operations can be defined
in two ways: either in terms of the rough approximations defined by the quasiorder $\leq_\mathcal{F}$ or in terms of the approximation pairs of
the maps in $\mathcal{F}$.

\begin{proposition} \label{prop:necessary}
Let $\mathcal{F}$ be a complete polarity sublattice $\mathbf{3}^U$. If $\mathcal{A(F)} = \mathcal{RS}$ for some quasiorder $\leq$,
then (C1)--(C3) hold.
\end{proposition}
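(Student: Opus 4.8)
The plan is to lean on the observations recorded immediately before the statement: once $\mathcal{A}(\mathcal{F}) = \mathcal{RS}$ holds, the quasiorders $\leq$ and $\leq_\mathcal{F}$ coincide, $C(\mathcal{F}) = \wp(U)^\DOWN$, and $S(\mathcal{F}) = \wp(U)^\UP$; consequently every pair $(C(f),S(f))$ with $f \in \mathcal{F}$ is the rough set $(X^\DOWN,X^\UP)$ of some $X \subseteq U$, and, conversely, every rough set is realised by some $f \in \mathcal{F}$. Throughout I would also use the standard facts that for a quasiorder $^\DOWN$ is an interior operator and $^\UP$ a closure operator (so both are idempotent), that $A^\DOWN \subseteq A \subseteq A^\UP$ for every $A$, and that $A = A^\DOWN$ precisely when $A \in \wp(U)^\DOWN$ while $A = A^\UP$ precisely when $A \in \wp(U)^\UP$.

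For (C1), I would take an $\mathcal{F}$-singleton $x$, so $[x)_\mathcal{F} = \{x\}$ and hence $[x) = \{x\}$ since $\leq = \leq_\mathcal{F}$. Given $f \in \mathcal{F}$ with $x \in S(f)$, write $(C(f),S(f)) = (X^\DOWN,X^\UP)$. Then $x \in X^\UP$ forces $[x) \cap X \neq \emptyset$, i.e.\ $x \in X$, whence $[x) = \{x\} \subseteq X$ gives $x \in X^\DOWN = C(f)$. For (C2), fix $x$ and consider the rough set $(\{x\}^\DOWN,\{x\}^\UP)$, which lies in $\mathcal{RS} = \mathcal{A}(\mathcal{F})$; choose $g \in \mathcal{F}$ realising it. Since $x \leq x$ we have $x \in \{x\}^\UP = S(g)$, so $g(x) \geq u$ and $g$ belongs to the family defining $f_x$; hence $f_x \leq g$ and $C(f_x) \subseteq C(g) = \{x\}^\DOWN \subseteq \{x\}$, the last inclusion by reflexivity of $\leq$.

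For (C3), take $f,g \in \mathcal{F}$ with $C(f) \subseteq S(g)$ and put $\hat{f} = \bigwedge\{ h \in \mathcal{F} \mid h \Theta f\}$; by Lemma~\ref{lem:ContinuousMeetsJoins} one gets $C(\hat{f}) = \bigcap\{ C(h) \mid C(h) = C(f)\} = C(f)$. Because $C(f) \in \wp(U)^\DOWN$, we have $C(f)^\DOWN = C(f)$, so $(C(f), C(f)^\UP)$ is the rough set of $C(f)$ and thus belongs to $\mathcal{A}(\mathcal{F})$; picking $h_0 \in \mathcal{F}$ that realises it gives $C(h_0) = C(f)$, so $h_0 \Theta f$, so $\hat{f} \leq h_0$ and $S(\hat{f}) \subseteq S(h_0) = C(f)^\UP$. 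On the other hand, applying $^\UP$ to $C(f) \subseteq S(g)$ and using $S(g) \in \wp(U)^\UP$, so $S(g)^\UP = S(g)$, yields $C(f)^\UP \subseteq S(g)$. Combining, $S(\hat{f}) \subseteq C(f)^\UP \subseteq S(g)$, which is exactly (C3).

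I expect (C3) to be the only genuinely delicate part: the crux is seeing that the support $S(\hat{f})$ of the meet of the $\Theta$-class of $f$ is dominated by $C(f)^\UP$, which I would obtain by exhibiting the rough set of $C(f)$ explicitly inside $\mathcal{A}(\mathcal{F})$ and then invoking idempotency of the approximation operators. Conditions (C1) and (C2), by contrast, drop out almost at once from the identification $\mathcal{A}(\mathcal{F}) = \mathcal{RS}$ together with the behaviour of $^\DOWN$ and $^\UP$ on singletons.
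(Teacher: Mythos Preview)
Your proof is correct and follows essentially the same route as the paper: for each of (C1)--(C3) you exhibit the relevant rough set in $\mathcal{RS}=\mathcal{A}(\mathcal{F})$, pull it back to a member of $\mathcal{F}$, and compare. The only difference is cosmetic: for (C2) and (C3) the paper phrases the comparison by first characterising $(C(f_x),S(f_x))$ and $(C(f_\Theta),S(f_\Theta))$ as the \emph{least} rough sets with prescribed second (resp.\ first) component, whereas you go straight to the particular witnesses $(\{x\}^\DOWN,\{x\}^\UP)$ and $(C(f),C(f)^\UP)$; your version is a little more direct but uses exactly the same ingredients.
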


\begin{proof} (C1) Let $x$ be an $\mathcal{F}$-singleton and $f \in \mathcal{F}$. There is $X \subseteq U$ such that
$C(f) = X^\DOWN$ and $S(f) = X^\UP$. Because $[x) = \{x\}$, $x \in S(f) = X^\UP$ means that $\{x\} \cap X = \emptyset$ and $x \in X$.
We have $[x) = \{x\} \subseteq X$, that is, $x \in X^\DOWN = C(f)$.    

(C2) By Lemma~\ref{lem:neighbourhoods}(b), $S(f_x) = (x]_\mathcal{F} = (x] = \{x\}^\UP$. Suppose $(Z^\DOWN,Z^\UP) \in \mathcal{RS}$
is such that $Z^\UP = \{x\}^\UP$. There is a map $g \in \mathcal{F}$ such that $(C(g),S(g)) = (Z^\DOWN,Z^\UP)$. Since
$x \in \{x\}^\UP = Z^\UP = S(g)$, we get $g(x) \geq u$. Therefore,
\[ f_x = \bigwedge \{ f \in \mathcal{F} \mid f(x) \geq u \} \leq g.\]
By the isomorphism given in  Proposition~\ref{Prop:Correspondence},  $(C(f_x),S(f_x)) \leq (C(g),S(g))$.
This means that $(C(f_x),S(f_x))$ is the least rough set such that the second component is $\{x\}^\UP$. Because
$(\{x\}^\DOWN, \{x\}^\UP)$ is such a rough set too, we have $C(f_x) \subseteq \{x\}^\DOWN \subseteq \{x\}$.

(C3) Assume $C(f) \subseteq S(g)$ for some $f,g \in \mathcal{F}$. We have that there are subsets $X,Y \subseteq U$ such that
$C(f) = X^\DOWN$ and $S(f) = Y^\UP$. Let us denote
  \[ f_\Theta = \bigwedge \{ h \in \mathcal{F} \mid h \Theta f \} .\]
Suppose that $(Z^\DOWN,Z^\UP) \in \mathcal{RS}$ is a rough set such that $Z^\DOWN = X^\DOWN$. Thus, there is $f' \in \mathcal{F}$ that satisfies
$(C(f'), S(f')) = (Z^\DOWN,Z^\UP)$. Because $f' \in \{ h \in \mathcal{F} \mid h \Theta f \}$, we have $f_\Theta \leq f'$.
By Proposition~\ref{Prop:Correspondence}, $(C(f_\Theta),S(f_\Theta)) \leq (C(f'),S(f'))$. Furthermore,
\[
  C(f_\Theta)  = C \big ( \bigwedge \{ h \in \mathcal{F} \mid h \Theta f \} \big ) 
   = \bigcap \{ C(h) \mid f \in \mathcal{F} \ \mbox{and} \ C(h) = C(f) \} = C(f).
\]
We have shown that $(C(f_\Theta),S(f_\Theta))$ is the smallest rough set such that its first component equals $X^\DOWN$.

Let $(A^\DOWN, A^\UP)$ be a rough set such that $A^\DOWN = X^\DOWN$. Then $X^\DOWN \subseteq A$ gives $X^{\DOWN \UP} \subseteq A^\UP$.
Note that $( (X^\DOWN)^\DOWN, (X^\DOWN)^\UP) ) = ( X^\DOWN, X^{\DOWN \UP} )$ is a  rough set. Therefore, $( X^\DOWN, X^{\DOWN \UP} )$ is the
smallest rough set such that its first component is $X^\DOWN$. Hence, $( X^\DOWN, X^{\DOWN \UP} ) = (C(f_\Theta), S(f_\Theta))$.
Since $X^\DOWN \subseteq Y^\UP$, we obtain
\[ S(f_\Theta) = X^{\DOWN \UP} \subseteq Y^{\UP \UP} = Y^\UP = S(g), \]
which completes the proof. \qed
\end{proof}

Let $\mathcal{F}$ be a complete polarity sublattice of $\mathbf{3}^U$. In the following theorem, we denote the rough approximations
defined by the quasiorder $\leq_\mathcal{F}$ by $X^\DOWN$ and $X^\UP$ for any $X \subseteq U$. Similarly, $\mathcal{RS}$ denotes the
corresponding rough set system.

\begin{theorem} \label{Thm:main}
Let $\mathcal{F}$ be a complete polarity sublattice of $\mathbf{3}^U$.
\begin{enumerate}[\rm (a)]
\item If (C1) holds, then $(C(f),S(f)) \in \mathcal{RS}$ for every $f \in \mathcal{F}$.
\item If (C2) and (C3) hold, then for any $(X^\DOWN,X^\UP) \in \mathcal{RS}$, there is $f \in \mathcal{F}$ such that
      $(X^\DOWN,X^\UP) = (C(f), S(f))$.
\end{enumerate}
\end{theorem}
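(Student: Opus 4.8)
The plan is to prove the two containments $\mathcal{A(F)}\subseteq\mathcal{RS}$ and $\mathcal{RS}\subseteq\mathcal{A(F)}$, i.e. parts (a) and (b), separately, using throughout that $C(\mathcal{F})=\wp(U)^\DOWN$ and $S(\mathcal{F})=\wp(U)^\UP$ for the operators defined by $\leq_\mathcal{F}$, and that the $\mathcal{F}$-singletons are exactly the elements $x$ with $[x)_\mathcal{F}=\{x\}$, i.e. the elements $\leq_\mathcal{F}$-related only to themselves. For part (a), let $f\in\mathcal{F}$. Then $C(f)\in C(\mathcal{F})=\wp(U)^\DOWN$, $S(f)\in S(\mathcal{F})=\wp(U)^\UP$, and $C(f)\subseteq S(f)$, so by the representation (\ref{eq:increasing}) it suffices to check $\mathcal{S}\subseteq C(f)\cup S(f)^{c}$, where $\mathcal{S}$ is the set of $\mathcal{F}$-singletons. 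If $x\in\mathcal{S}$, then (C1) says $x\in S(f)$ implies $x\in C(f)$, so $x\in C(f)$ or $x\in S(f)^{c}$; hence $\mathcal{S}\subseteq C(f)\cup S(f)^{c}$ and $(C(f),S(f))\in\mathcal{RS}$. (Alternatively one could exhibit a suitable set $X$ with $(X^\DOWN,X^\UP)=(C(f),S(f))$ directly, but invoking (\ref{eq:increasing}) is cleaner.)

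For part (b), fix $X\subseteq U$ and aim to realise $(X^\DOWN,X^\UP)$ as $\varphi(g)$ for an explicit $g\in\mathcal{F}$. I would first record facts about the generators $f^x$ and $f_x$. By Lemma~\ref{lem:neighbourhoods}(b), $C(f^x)=[x)_\mathcal{F}$ and $S(f_x)=(x]_\mathcal{F}=\{x\}^{\UP}$. Moreover $f^x$ is the meet of its entire $\Theta$-class: it lies in $\{h\in\mathcal{F}\mid C(h)=[x)_\mathcal{F}\}$, and every such $h$ has $h(x)=1$ since $x\in[x)_\mathcal{F}=C(h)$, whence $f^x\leq h$. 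Applying (C3) with $f:=f^x$ therefore gives, for all $h\in\mathcal{F}$, that $[x)_\mathcal{F}\subseteq S(h)$ implies $S(f^x)\subseteq S(h)$; intersecting over all such $h$ and using that $\wp(U)^\UP=S(\mathcal{F})$ is an Alexandrov topology whose least member containing a set $Y$ is $Y^{\UP}$, we get $S(f^x)\subseteq [x)_\mathcal{F}^{\UP}$. Finally (C2) gives $C(f_x)\subseteq\{x\}$, and by Lemma~\ref{lem:singletons} the case $C(f_x)=\{x\}$ forces $x$ to be an $\mathcal{F}$-singleton.

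Now set $g=\bigvee_{x\in X^\DOWN}f^x\ \vee\ \bigvee_{y\in X}f_y$, which belongs to $\mathcal{F}$ because $\mathcal{F}$ is a complete sublattice. By Lemma~\ref{lem:ContinuousMeetsJoins}, $C(g)=\bigcup_{x\in X^\DOWN}[x)_\mathcal{F}\ \cup\ \bigcup_{y\in X}C(f_y)$. The first union equals $X^\DOWN$, since $X^\DOWN\in\wp(U)^\DOWN$ is $\leq_\mathcal{F}$-up-closed, so $[x)_\mathcal{F}\subseteq X^\DOWN$ for every $x\in X^\DOWN$, while $x\in[x)_\mathcal{F}$; and for $y\in X$ with $C(f_y)\neq\emptyset$ we have $C(f_y)=\{y\}$ with $y$ an $\mathcal{F}$-singleton, so $[y)_\mathcal{F}=\{y\}\subseteq X$ and thus $y\in X^\DOWN$, giving $C(f_y)\subseteq X^\DOWN$; hence $C(g)=X^\DOWN$. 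Likewise $S(g)=\bigcup_{x\in X^\DOWN}S(f^x)\ \cup\ \bigcup_{y\in X}\{y\}^{\UP}$, where $\bigcup_{y\in X}\{y\}^{\UP}=\big(\bigcup_{y\in X}\{y\}\big)^{\UP}=X^\UP$ and, for $x\in X^\DOWN$, $S(f^x)\subseteq [x)_\mathcal{F}^{\UP}\subseteq (X^\DOWN)^{\UP}\subseteq X^\UP$; hence $S(g)=X^\UP$. Consequently $\varphi(g)=(C(g),S(g))=(X^\DOWN,X^\UP)$, as required.

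The step I expect to be the main obstacle is the support bound $S(f^x)\subseteq[x)_\mathcal{F}^{\UP}$: one must recognise that $f^x=\bigwedge\{h\in\mathcal{F}\mid h\,\Theta\,f^x\}$ so that (C3) applies to $f^x$ itself, and then pass from the resulting family of inclusions $S(f^x)\subseteq S(h)$ to the single bound through the description of $S(\mathcal{F})=\wp(U)^\UP$ as an Alexandrov topology. The rest is bookkeeping with cores and supports via Lemmas~\ref{lem:ContinuousMeetsJoins}, \ref{lem:neighbourhoods}, \ref{lem:singletons} and the identifications $C(\mathcal{F})=\wp(U)^\DOWN$, $S(\mathcal{F})=\wp(U)^\UP$.
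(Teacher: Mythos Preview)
Your proof is correct. Part~(a) is essentially identical to the paper's argument: both invoke the representation~(\ref{eq:increasing}) after observing that (C1) forces every $\mathcal{F}$-singleton into $C(f)\cup S(f)^c$.

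For part~(b) you take a genuinely different route. The paper proceeds structurally: it cites the external characterisation
\[
\mathcal{J(RS)} = \{ (\emptyset,\{x\}^\UP) \mid \, | [x) | \geq 2\} \cup \{ ([x),[x)^\UP) \mid x \in U\}
\]
from \cite{JRV09}, realises each join-irreducible as $(C(f_x),S(f_x))$ or $(C(f^x),S(f^x))$ respectively (here the paper actually proves the \emph{equality} $S(f^x)=[x)^\UP$), and then uses spatiality of $\mathcal{RS}$ together with closure of $\mathcal{A(F)}$ under joins. You instead write down a single explicit witness $g=\bigvee_{x\in X^\DOWN}f^x\vee\bigvee_{y\in X}f_y$ for an arbitrary $(X^\DOWN,X^\UP)$ and compute $C(g)$ and $S(g)$ directly. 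The underlying inputs are the same---(C2) with Lemma~\ref{lem:singletons} to control $C(f_y)$, and (C3) together with the identity $f^x=\bigwedge\{h\mid h\,\Theta\,f^x\}$ to bound $S(f^x)$---but your packaging avoids the external citation and only needs the inclusion $S(f^x)\subseteq[x)^\UP$ rather than equality, since the $f_y$ summands already supply $S(g)\supseteq X^\UP$. The paper's approach, on the other hand, yields the extra structural information that the generators $f_x,f^x$ already realise the completely join-irreducible rough sets.
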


\begin{proof} (a) Take $f \in \mathcal{F}$. We know that $C(f) \subseteq S(f)$, $C(f) \in \wp(U)^\DOWN$ and $S(f) \in \wp(U)^\UP$ by
Lemma~\ref{lem:properties_quasiorder}.
Let $x$ be an $\mathcal{F}$-singleton. By (C1), $x \in C(f) \cup S(f)^c$. We proved in \cite[Prop.~4.2]{JaPaRa13} that for a quasiorder $\leq$,
a pair $(A,B)$ is a rough set if and only if $A \in \wp(U)^\DOWN$, $B \in \wp(U)^\UP$, $A \subseteq B$ and $x \in A \cup B^c$ for all $x \in U$ such
that $[x) = \{x\}$. The claim follows directly from this.

(b) In \cite[Thm.~5.2]{JRV09}, we proved that for a quasiorder $\leq$ on $U$, 
\[ \mathcal{J(RS)} = \{ (\emptyset,\{x\}^\UP) \mid \, | [x) | \geq 2\} \cup \{ ([x),[x)^\UP) \mid x \in U\} \]
is the set of completely join-irreducible elements and each element of $\mathcal{RS}$ is a join of some (or none) elements of $\mathcal{J(RS)}$.

Let $x \in U$ be such that $|[x)| \geq 2$. Condition (C2) yields $C(f_x) \subseteq \{x\}$. Now $C(f_x) = \{x\}$ is not possible,
because Lemma~\ref{lem:singletons} would imply that $x$ is an $\mathcal{F}$-singleton, contradicting $|[x)| \geq 2$. We have $C(f_x) = \emptyset$ and
we have earlier noted that $S(f_x) = (x] = \{x\}^\UP$. Thus, $(\emptyset,\{x\}^\UP) = (C(f_x), S(f_x))$.    

\smallskip%

Let us next consider a rough set of the form $([x),[x)^\UP)$, where $x \in U$. 
Because $[x) = C(f^x) \subseteq S(f^x)$, $f^x$ is an element of
$\{ f \in \mathcal{F} \mid [x) \subseteq S(f) \}$. We obtain
\[ \bigwedge \{ f \in \mathcal{F} \mid [x) \subseteq S(f) \} \leq f^x \]
and further
\begin{equation} \label{eq:first_side}  
S \big ( \bigwedge \{ f \in \mathcal{F} \mid [x) \subseteq S(f) \} \big ) \leq S(f^x) . 
\end{equation}
Using Lemma~\ref{lem:approximations_new}, we obtain
\[ C(f^x) = [x) \subseteq  [x)^\UP = S \big ( \bigwedge \{ f \in \mathcal{F} \mid [x) \subseteq S(f) \} \big )  .\]      
By (C3), 
\[ S \big ( \bigwedge \{ h \in \mathcal{F} \mid h \Theta f^x \} \big ) \subseteq S \big ( \bigwedge \{ f \in \mathcal{F} \mid [x) \subseteq S(f) \} \big )  .
\]
We have proved in Lemma~\ref{lem:neighbourhoods} that $f^x = \bigwedge \{ h \in \mathcal{F} \mid h \Theta f^x\}$. This gives
$S(f^x) = S(\bigwedge \{ h \in \mathcal{F} \mid h \Theta f^x\})$ and we have
\begin{equation} \label{eq:other_side}
S(f^x) \subseteq  S \big ( \bigwedge \{ f \in \mathcal{F} \mid [x) \subseteq S(f) \} \big ).
\end{equation}
Combining (\ref{eq:first_side}) and (\ref{eq:other_side}), we have $[x)^\UP = S ( \bigwedge \{ f \in \mathcal{F} \mid [x) \subseteq S(f) \} = S(f^x)$.
Since $[x) = C(f^x)$,  $([x),[x)^\UP) = (C(f^x), S(f^x))$.

\smallskip%

Let $(X^\DOWN,X^\UP) \in \mathcal{RS}$. As we already noted, each element of $\mathcal{RS}$ is a join of elements of $\mathcal{J(RS)}$, that is,
\[ (X^\DOWN,X^\UP) = \bigvee_{i \in I} (J_i^\DOWN, J_i ^\UP) \]
for some $\{ (J_i^\DOWN, J_i ^\UP) \mid i \in I\} \subseteq \mathcal{J(RS)}$. By the above, every $(J_i^\DOWN, J_i ^\UP)$ is of the form
$(C(\varphi_i), S(\varphi_i))$, where each $\varphi_i$ belongs to $\mathcal{F}$. We have
\[
(X^\DOWN,X^\UP) = \bigvee_{i \in I} (J_i^\DOWN, J_i ^\UP) = \Big (\bigcup_{i \in I} J_i^\DOWN, \bigcup_{i \in I} J_i ^\UP \Big ) =
\Big (\bigcup_{i \in I} C(\varphi_i), \bigcup_{i \in I} S(\varphi \Big ) \\
= \Big ( C \big (\bigvee_{i \in I} \varphi_i \big), S \big ( \bigvee_{i \in I} \varphi_i \big ) \Big ), 
\]
completing the proof. \qed
\end{proof}

We can now write the following theorem answering to Question~\ref{q:quasiorder}.

\begin{theorem} \label{thm:quasiorder}
If $\mathcal{F} \subseteq \mathbf{3}^U$, then $\mathcal{A(F)} = \mathcal{RS}$ for some quasiorder on $U$ if and
only if $\mathcal{F}$ is a complete polarity sublattice of $\mathbf{3}^U$ satisfying (C1)--(C3).
\end{theorem}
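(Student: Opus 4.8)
The plan is to split the equivalence into its two implications and reduce each of them to results already proved.

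For the ``only if'' direction, assume $\mathcal{A(F)} = \mathcal{RS}$ for some quasiorder $\leq$ on $U$. I would first recall (as noted in the Introduction, following \cite{JRV09}) that for a quasiorder $\leq$ the rough set system $\mathcal{RS}$ is a complete polarity sublattice of $\mathcal{A}(U)$. By Proposition~\ref{Prop:Correspondence} the map $\varphi \colon \mathbf{3}^U \to \mathcal{A}(U)$, $f \mapsto (C(f),S(f))$, is an order-isomorphism, hence preserves arbitrary joins and meets; moreover, by the computation at the end of Section~\ref{Sec:algebras}, $\varphi({\sim} f) = {\sim}\varphi(f)$. Therefore $\mathcal{F} = \varphi^{-1}(\mathcal{A(F)}) = \varphi^{-1}(\mathcal{RS})$ is a complete sublattice of $\mathbf{3}^U$ closed with respect to ${\sim}$, that is, a complete polarity sublattice of $\mathbf{3}^U$. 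Once this is known, conditions (C1)--(C3) are precisely the content of Proposition~\ref{prop:necessary}.

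For the ``if'' direction, assume $\mathcal{F}$ is a complete polarity sublattice of $\mathbf{3}^U$ satisfying (C1)--(C3). I would take the quasiorder $\leq_\mathcal{F}$ and let $\mathcal{RS}$ be the rough set system it determines, as in the set-up of Theorem~\ref{Thm:main}. By Theorem~\ref{Thm:main}(a), (C1) yields $(C(f),S(f)) \in \mathcal{RS}$ for every $f \in \mathcal{F}$, i.e. $\mathcal{A(F)} \subseteq \mathcal{RS}$. By Theorem~\ref{Thm:main}(b), (C2) and (C3) yield that every $(X^\DOWN,X^\UP) \in \mathcal{RS}$ has the form $(C(f),S(f))$ for some $f \in \mathcal{F}$, i.e. $\mathcal{RS} \subseteq \mathcal{A(F)}$. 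Combining the two inclusions gives $\mathcal{A(F)} = \mathcal{RS}$, so $\leq_\mathcal{F}$ is the required quasiorder.

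Since both directions are essentially bookkeeping built on top of Proposition~\ref{prop:necessary} and Theorem~\ref{Thm:main}, I do not expect a genuine obstacle. The one place that needs a little care is the necessity direction: one must explicitly use that $\mathcal{RS}$ (for a quasiorder) is a \emph{complete polarity} sublattice of $\mathcal{A}(U)$ and that $\varphi$ is an isomorphism of polarity lattices, so that pulling $\mathcal{RS}$ back along $\varphi^{-1}$ produces a complete polarity sublattice of $\mathbf{3}^U$ --- only then is $\mathcal{F}$ known to satisfy the hypothesis of Proposition~\ref{prop:necessary} and the argument closes.
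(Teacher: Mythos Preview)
Your proposal is correct and follows essentially the same approach as the paper: both directions reduce to Proposition~\ref{prop:necessary} and Theorem~\ref{Thm:main}, respectively. The only cosmetic difference is in the ``only if'' direction, where the paper verifies closure of $\mathcal{F}$ under joins, meets and ${\sim}$ by explicit computation via Lemma~\ref{lem:ContinuousMeetsJoins}, whereas you argue more succinctly by pulling back the complete polarity sublattice $\mathcal{RS}$ along the isomorphism $\varphi$; both arguments are equivalent.
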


\begin{proof}
Suppose that $\mathcal{RS} = \mathcal{A(F)}$. Then $\mathcal{A(F)} = \{ (C(f), S(f)) \mid f \in \mathcal{F}\}$ is a complete polarity
sublattice of $\wp(U) \times \wp(U)$. Let $\mathcal{G} \subseteq \mathcal{F}$. By Lemma~\ref{lem:ContinuousMeetsJoins},
\[ C \big ( \bigvee \mathcal{G} \big ) = \bigcup_{f \in \mathcal{F}} C(f) \quad \mbox{and} \quad
S \big (\bigvee \mathcal{G} \big ) = \bigcup_{f \in \mathcal{F}} S(f) .\]
Thus, 
\[ \mathcal{A}\big ( \bigvee \mathcal{G} \big ) = \big ( \bigcup_{f \in \mathcal{F}} C(f), \bigcup_{f \in \mathcal{F}} S(f) \big ) \in \mathcal{A(F)} .\] 
Using the inverse $\varphi^{-1}$ of the isomorphism $\varphi$ of Proposition~\ref{Prop:Correspondence}, we have
\[ \bigvee \mathcal{G} = \varphi^{-1}\big ( \mathcal{A}\big ( \bigvee \mathcal{G} \big ) \big ) \in \mathcal{F}.\]
Similarly, we can show that $\bigwedge \mathcal{G}$ belongs to $\mathcal{F}$. For $f \in \mathcal{F}$,
\[ \mathcal{A}({\sim}f) = ( C({\sim} f), S({\sim} f)) = (S(f)^c,C(f)^c) = {\sim} ( C(f), S(f)) \in \mathcal{A(F)}.\]
We have that ${\sim}f = \varphi^{-1}(\mathcal{A}({\sim}f))$ belongs to $\mathcal{F}$. Thus, $\mathcal{F}$ is a complete polarity sublattice
of  $\mathbf{3}^U$. By Proposition~\ref{prop:necessary}, $\mathcal{F}$ satisfies (C1)--(C3).

On the other hand, if $\mathcal{F}$ is a complete polarity sublattice of $\mathbf{3}^U$ satisfying (C1)--(C3), then by 
Theorem~\ref{Thm:main}(a), $\mathcal{A(F)} \subseteq \mathcal{RS}$ and Theorem~\ref{Thm:main}(b) yields 
$\mathcal{RS} \subseteq \mathcal{A(F)}$. Therefore, $\mathcal{RS} = \mathcal{A(F)}$.  \qed
\end{proof}
  
\begin{example} Let $U = \{a,b,c\}$.
  
(a) Suppose that $\mathcal{F} \subseteq \mathbf{3}^U$ consists of the following maps:
\[
\begin{array}{llll}
f_1 \colon  a \mapsto 0, b \mapsto 0, c \mapsto 0; & f_2 \colon  a \mapsto u, b \mapsto u, c \mapsto 0; &
f_3 \colon  a \mapsto 0, b \mapsto 0, c \mapsto u; & f_4 \colon a \mapsto u, b \mapsto u, c \mapsto u; \\[1mm]
f_5 \colon  a \mapsto 1, b \mapsto 1, c \mapsto u; & f_6 \colon  a \mapsto u, b \mapsto u, c \mapsto 1; &
f_7 \colon a \mapsto 1, b \mapsto 1, c \mapsto 1.  & 
\end{array}
\]
Obviously, $\mathcal{F}$ is and its Hasse diagram is given in Figure~\ref{Fig:Kleene}(a).
Figure~\ref{Fig:Kleene}(b) contains the Hasse diagram of the corresponding approximations $\mathcal{A(F)}$. Note that elements of sets
are denoted simply by sequences of their elements. For instance, $\{a,b\}$ is denoted $ab$.
\begin{figure}
\centering
\includegraphics[width=110mm]{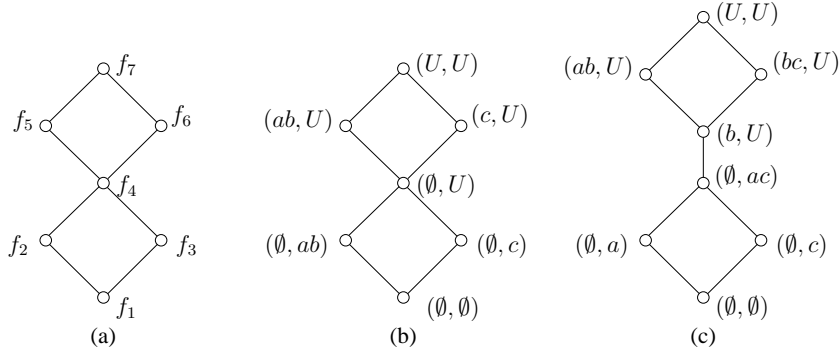}
\caption{\label{Fig:Kleene} A complete polarity sublattice of $\mathbf{3}^U$ not corresponding to any rough set algebra
is in (a), and (b) depicts its approximation pairs. The Hasse diagram of $\mathcal{RS} = \{ (X^\DOWN,X^\UP) \mid X \subseteq U \}$
is given in (c).}
\end{figure}

Now $C(\mathcal{F}) = \{\emptyset, \{a,b\}, \{c\},U\}$ and the corresponding quasiorder $\leq_\mathcal{F}$ is the equivalence whose equivalence
classes are $\{a,b\}$ and $\{c\}$. It is obvious that $\mathcal{A(F)}$ cannot be equal with rough set system $\mathcal{RS}$ induced by
$\leq_\mathcal{F}$, because $\mathcal{RS}$ is isomorphic to the product $\mathbf{2} \times \mathbf{3}$ and $\mathcal{A(F)}$ is not.
Let us now verify that conditions (C1)--(C3) do not hold.

The element $c$ is an $\mathcal{F}$-singleton. Now $c \in S(f_3)$, but $c \notin C(f_3)$. Therefore, (C1) does not hold.

Let us first compute the map $f_x = \bigwedge \{ f \mid \mathcal{F} \mid f(x) = 1\}$ for each $x \in U$:
\[ f_a = f_b = f_5 \wedge f_7 = f_5 \quad \mbox{and} \quad f_c = f_6 \wedge f_7 = f_6.\]
Now, for example, $C(f_b) = C(f_5) = \{a,b\} \nsubseteq \{b\}$, meaning that (C2) is not true.

The equivalence $\Theta$ has four classes:
\[ \{ f_1,f_2,f_3,f_4\}, \{f_5\}, \{f_6\}, \{f_7\}. \]
Now we have $C(f_6) = \{c\} = S(f_3)$, but $S(f_6) = U \nsubseteq \{c\} = S(f_3)$. Because $f_6$ is the only element in its $\Theta$-class, this
means that (C3) does not hold.

(b) Let us consider a quasiorder $\leq$ on $U$ such that
\[ [a) = \{a,b\}, \ [b) = \{b\}, \ [c) = \{b,c\} .\]

The Hasse diagram of $\mathcal{RS} = \{ (X^\DOWN,X^\UP) \mid X \subseteq U \}$ is depicted in Figure~\ref{Fig:Kleene}(c). Using
(\ref{Eq:characteristic}), we form the corresponding functions $U \to \mathbf{3}$:
\[
\begin{array}{lcl}
  f_{(\emptyset,\emptyset)} \colon  a \mapsto 0, b \mapsto 0, c \mapsto 0; & \quad & f_{(\emptyset,\{a\})} \colon  a \mapsto u, b \mapsto 0, c \mapsto 0;\\[1mm]
  f_{(\emptyset,\{c\})} \colon a \mapsto 0, b \mapsto 0, c \mapsto u; & \quad & f_{(\emptyset,\{a,c\})} \colon   a \mapsto u, b \mapsto 0, c \mapsto u;\\[1mm]
  f_{(\{b\},U)} \colon  a \mapsto u, b \mapsto 1, c \mapsto u; & \quad & f_{(\{a,b\},U)} \colon  a \mapsto 1, b \mapsto 1, c \mapsto u;\\[1mm]
  f_{(\{b,c\},U)} \colon  a \mapsto u, b \mapsto 1, c \mapsto 1; & \quad & f_{(U,U)} \colon  a \mapsto 1, b \mapsto 1, c \mapsto 1.
\end{array}
\]
Condition (C1) has now the interpretation that if an $\mathcal{F}$-singleton belongs to an upper approximation $X^\UP$ of some subset $X$ of $U$,
it belongs also to the corresponding lower approximation $X^\DOWN$. By the proof of Proposition~\ref{prop:necessary}, $C(f_x)$ corresponds to
the lower approximation $\{x\}^\DOWN$, which is always included in $\{x\}$. This is expressed in (C2). Conditions (C1) and (C2) hold
actually for all reflexive binary relations.

In terms of rough sets, condition (C3) can be written as: If $X^\DOWN \subseteq Y^\UP$ and $\mathcal{H} = \{ Z \subseteq U \mid Z^\DOWN = X^\DOWN\}$,
then $\bigcap \{ Z^\UP \mid  Z \in \mathcal{H} \} \subseteq Y^\UP$.
This condition does not hold for tolerances, for instance. Let $R$ be a tolerance on $U$ such that $R(a) = \{a,b\}$, $R(b) = U$ and $R(c) = \{b,c\}$.
Let $X = \{a,b\}$ and $Y = \{a\}$. Now $X^\DOWN = \{a\} \subseteq \{a,b\} = Y^\UP$.
It can be easily checked that $\mathcal{H} = \{X\}$. Now $X^\UP = U \nsubseteq \{a,b\} = Y^\UP$.
\end{example}

We end this work by the following theorem answering to Question~\ref{q:equivalence}.

\begin{theorem} \label{thm:equivalence}
If $\mathcal{F} \subseteq \mathbf{3}^U$, then $\mathcal{A(F)} = \mathcal{RS}$ for some equivalence on $U$ if and
only if $\mathcal{F}$ is a complete {\L}ukasiewicz subalgebra of $\mathbf{3}^U$ satisfying (C1)--(C3).
\end{theorem}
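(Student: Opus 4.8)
The strategy is to combine Theorem~\ref{thm:quasiorder} with our earlier characterization of when a quasiorder-based rough set system carries a three-valued {\L}ukasiewicz structure. First I would handle the forward direction. Assume $\mathcal{A(F)} = \mathcal{RS}$ for some equivalence $E$ on $U$. Since an equivalence is in particular a quasiorder, Theorem~\ref{thm:quasiorder} immediately gives that $\mathcal{F}$ is a complete polarity sublattice of $\mathbf{3}^U$ satisfying (C1)--(C3), and that the associated quasiorder $\leq_\mathcal{F}$ coincides with $E$ (using the one-to-one correspondence between Alexandrov topologies and quasiorders, as recorded in the discussion preceding Proposition~\ref{prop:necessary}). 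It remains to argue that $\mathcal{F}$ is closed under the {\L}ukasiewicz operations. Here I invoke the result cited from \cite{JarRad11} (mentioned in the Introduction) that $\mathcal{RS}$ forms a three-valued {\L}ukasiewicz algebra precisely when the defining relation is an equivalence; concretely, for an equivalence $E$ the operations ${\vartriangle}(X^\DOWN,X^\UP) = (X^\DOWN,X^\DOWN)$ and ${\triangledown}(X^\DOWN,X^\UP) = (X^\UP,X^\UP)$ are well-defined on $\mathcal{RS}$. Transporting this back along the isomorphism $\varphi$ of Proposition~\ref{Prop:Correspondence} (which, as noted at the end of Section~\ref{Sec:algebras}, preserves all the relevant operations), closure of $\mathcal{A(F)} = \mathcal{RS}$ under ${\triangledown}$ forces $\mathcal{F}$ to be closed under the ${\triangledown}$ of $\mathbf{3}^U$, and then Proposition~\ref{Prop:ImplyLuka} upgrades this to closure under all of $^*$, $^+$, ${\vartriangle}$, $\to$, $\Rightarrow$; hence $\mathcal{F}$ is a complete {\L}ukasiewicz subalgebra.

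For the converse, suppose $\mathcal{F}$ is a complete {\L}ukasiewicz subalgebra of $\mathbf{3}^U$ satisfying (C1)--(C3). Since (C1)--(C3) hold and $\mathcal{F}$ is in particular a complete polarity sublattice, Theorem~\ref{thm:quasiorder} yields a quasiorder --- namely $\leq_\mathcal{F}$ --- with $\mathcal{A(F)} = \mathcal{RS}$, where $\mathcal{RS}$ is the rough set system of $\leq_\mathcal{F}$. So the only thing left is to show that $\leq_\mathcal{F}$ is actually an equivalence, i.e.\ symmetric. This is exactly the content of the first Remark following Lemma~\ref{lem:properties_quasiorder}: if $\mathcal{F}$ is a complete sublattice and a three-valued {\L}ukasiewicz subalgebra of $\mathbf{3}^U$, then by Lemma~\ref{lem:inducesAlex}(c) one has $C(\mathcal{F}) = S(\mathcal{F})$, and Lemma~\ref{lem:properties_quasiorder}(a),(b) then identify $\leq_\mathcal{F}$ with its own inverse, so $\leq_\mathcal{F}$ is symmetric, hence an equivalence. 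Therefore $\mathcal{A(F)} = \mathcal{RS}$ for the equivalence $E = {\leq_\mathcal{F}}$, finishing the proof.

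I expect the main obstacle to be the forward direction's use of the {\L}ukasiewicz structure: one must be careful that the three-valued {\L}ukasiewicz algebra structure carried by $\mathcal{RS}$ for an equivalence is the \emph{restriction} of the one on $\mathcal{A}(U)$ (equivalently on $\mathbf{3}^U$), rather than some abstractly isomorphic but externally defined structure. This is where the explicit formulas ${\vartriangle}(A,B)=(A,A)$, ${\triangledown}(A,B)=(B,B)$ from Example~\ref{exa:3valued}, together with the fact from \cite{Iturrioz99} (quoted in Section~\ref{Sec:algebras}) that these are precisely the operations on $\mathcal{RS}$ when $R$ is an equivalence, do the work: closure of $\mathcal{RS}$ under these operations within $\mathcal{A}(U)$ is genuine closure, not merely the existence of an abstract operation. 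Once that identification is made, Proposition~\ref{Prop:ImplyLuka} and the operation-preservation of $\varphi$ make the rest routine.
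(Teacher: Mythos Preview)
Your proposal is correct and follows the same overall scaffold as the paper: both directions rest on Theorem~\ref{thm:quasiorder} together with Proposition~\ref{Prop:ImplyLuka}. The differences are minor swaps of tools between the two directions. For the forward implication the paper verifies closure under $^*$ directly via Lemma~\ref{lem:support_star} (computing $(C(f^*),S(f^*))=(S(f)^c,S(f)^c)\in\mathcal{RS}$), whereas you verify closure under ${\triangledown}$ by invoking the known fact that for an equivalence the operation ${\triangledown}(X^\DOWN,X^\UP)=(X^\UP,X^\UP)$ stays in $\mathcal{RS}$; both routes feed into Proposition~\ref{Prop:ImplyLuka} identically, and your care about the ${\triangledown}$ on $\mathcal{RS}$ being the \emph{restriction} of that on $\mathcal{A}(U)$ is exactly the point that makes this work. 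For the converse the paper appeals to the external result \cite[Prop.~4.5]{JarRad11} that $\mathcal{RS}$ is a three-valued {\L}ukasiewicz algebra iff the quasiorder is an equivalence, while you argue more internally via Lemma~\ref{lem:inducesAlex}(c) and the Remark after Lemma~\ref{lem:properties_quasiorder} that $C(\mathcal{F})=S(\mathcal{F})$ forces $\leq_\mathcal{F}$ to be symmetric. Your converse is thus slightly more self-contained; the paper's forward direction is slightly more direct. Either combination is fine.
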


\begin{proof} Assume that $\mathcal{A(F)} = \mathcal{RS}$ for some equivalence on $U$. Then, by Theorem~\ref{thm:quasiorder},
$\mathcal{F}$ is a complete polarity sublattice of $\mathbf{3}^U$ satisfying (C1)--(C3). 
By Proposition~\ref{Prop:ImplyLuka} it is enough to show that $\mathcal{F}$ is closed with respect to $^*$. By Lemma~\ref{lem:support_star}, 
\[ \mathcal{A}(f^*) = (C(f^*), S(f^*)) = (S(f)^c, S(f)^c) = (C(f),(S(f))^* \in \mathcal{RS} = \mathcal{A(F)} .\]  
We have that $f^* = \varphi^{-1}(\mathcal{A}(f^*))$ belongs to $\mathcal{F}$. Thus,
$\mathcal{F}$ is a complete {\L}ukasiewicz subalgebra of $\mathbf{3}^U$.

Conversely, suppose that $\mathcal{F}$ is a complete {\L}ukasiewicz subalgebra of $\mathbf{3}^U$ satisfying (C1)--(C3).
By Theorem~\ref{thm:quasiorder}, $\mathcal{A(F)} = \mathcal{RS}$ for some quasiorder $R$. We proved in \cite[Prop.~4.5]{JarRad11}
that for any quasiorder $R$, $\mathcal{RS}$ forms a three-valued {\L}ukasiewicz algebra if and only if $R$ is an equivalence.
This completes the proof. \qed
\end{proof}

\section*{Some concluding remarks}

In this work we have answered the question what conditions a collection $\mathcal{F}$ of 3-valued functions on $U$ must fulfill
so that there exists a quasiorder $\leq$ on $U$ such that the set $\mathcal{RS}$ of rough sets defined by $\leq$ coincides with the
set $\mathcal{A(F)}$ of approximation pairs defined by $\mathcal{F}$. Furthermore, we give a new representation of rough sets determined
by equivalences in terms of three-valued {\L}ukasiewicz algebras of three-valued functions.

It is known that for tolerances determined by irredundant coverings on $U$, the induced rough set structure $\mathcal{RS}$ is a
regular pseudocomplemented Kleene algebra, but now $\mathcal{RS}$ is not a complete sublattice of the product $\wp(U) \times \wp(U)$;
see \cite{JarRad14, JarRad17, JarRad19}. This means that if $\mathcal{F}$ is a collection of three-valued maps such that
$\mathcal{A(F)} = \mathcal{RS}$, then obviously $\mathcal{F}$ is not a complete sublattice of $\mathbf{3}^U$. A natural question then
is what properties $\mathcal{F}$ needs to have to define a rough set system determined by a tolerance induced by an irredundant covering.

Finally, in this work we have considered approximation pairs defined by three-valued functions. But one could change $\mathbf{3}$ to some
other structure. For instance, $\mathbf{3}$ could be replaced by 4-element lattice introduced in \cite{belnap1977}, where
$\mathbf{L4}$ denotes the lattice $\mathbf{F} < \mathbf{Both}, \mathbf{None} < \mathbf{T}$, where $\mathbf{F}$ means `false', $\mathbf{Both}$ means
`both true and false', $\mathbf{None}$ means `neither true nor false', and $\mathbf{T}$ means `true'. In such a setting we could
consider, for instance, the approximation pairs formed of level set of functions $f \colon U \to \mathbf{L4}$, that is,
$f_\alpha = \{ x \in U \mid f(x) \geq \alpha\}$, where $\alpha$ belongs to $ \mathbf{L4}$.

\section*{Declaration of competing interest and authorship conformation}

The authors declare that they have no known competing financial interests or personal relationships that could have appeared to
influence the work reported in this paper. Both authors have participated in drafting the article and approve the final version.

\section*{Acknowledgement}

The authors would like to thank the anonymous referees for the significant time and effort
they put in to provide expert views on our original manuscript. Also a question posed by the Area Editor
urged us to add more motivations from the approximate reasoning standpoint.

The research of the second author started as part of the TAMOP-4.2.1.B-10/2/KONV-2010-0001
project, supported by the European Union, co-financed by the European Social Fund 113/173/0-2.


\end{document}